\documentclass[11pt]{article}
\usepackage[utf8]{inputenc}
\usepackage[T1]{fontenc}
\usepackage{comment}
\usepackage{authblk}
\usepackage{relsize}
\usepackage{changepage}
\usepackage{mathtools}
\usepackage[english]{babel}
\usepackage{amsmath}
\usepackage{amsthm}
\usepackage{amsfonts}
\usepackage{amssymb}
\usepackage{graphicx}
\usepackage[usenames,dvipsnames]{color}
\theoremstyle{plain}
\newtheorem{theorem}{Theorem}[section]
\newtheorem{corollary}[theorem]{Corollary}
\newtheorem{claim}[theorem]{Claim}
\newtheorem{remark}[theorem]{Remark}

\newtheorem{question}[theorem]{Question}
\newtheorem{lemma}[theorem]{Lemma}

\newtheorem{observation}[theorem]{Observation}

\makeatletter
\newcommand{\vast}{\bBigg@{4}}
\newcommand{\Vast}{\bBigg@{5}}
\makeatother
\definecolor{bulgarianrose}{rgb}{0.28, 0.02, 0.03}
\definecolor{gray}{rgb}{0.5, 0.5, 0.5}

\theoremstyle{definition}

\theoremstyle{remark}
\newtheorem*{fact*}{Fact}
\newtheorem*{question*}{Question}
\usepackage[left=2cm,right=2cm,top=2cm,bottom=2cm]{geometry}
\usepackage{amssymb}
\usepackage{hyperref}
\makeatletter
\def\namedlabel#1#2{\begingroup
    #2%
    \def\@currentlabel{#2}%
    \phantomsection\label{#1}\endgroup
}
\usepackage[normalem]{ulem}
\usepackage{dsfont}
\showboxdepth=5
\showboxbreadth=5
\usepackage{accents}
\usepackage{verbatim}
\usepackage{ulem}
\usepackage{pgf,tikz,pgfplots}
\usepackage[all]{xy}
%\pgfplotsset{compat=1.17}%take out before submission

\usepackage{stackengine}
\stackMath
\newcommand\tsup[2][2]{%
 \def\useanchorwidth{T}%
  \ifnum#1>1%
    \stackon[-.5pt]{\tsup[\numexpr#1-1\relax]{#2}}{\scriptscriptstyle\sim}%
  \else%
    \stackon[.5pt]{#2}{\scriptscriptstyle\sim}%
  \fi%
}

\newcommand{\Bin}{\mathrm{Bin}}
\newcommand{\bard}{\overline{d}}

\usepackage{subfig}

\usepackage{enumerate}

\title{\scshape
  The giant component after percolation of product graphs}

\author{Lyuben Lichev}
\affil{Ecole Normale Sup\'erieure de Lyon, Lyon, France}

\begin{document}

\maketitle
 
\begin{abstract}
In this paper we show the existence of a sharp threshold for the appearance of a giant component after percolation of Cartesian products of graphs under assumptions on their maximum degrees and their isoperimetric constants. In particular, this generalises a work of Ajtai, Koml\'os and Szemer\'edi from 1982 concerning percolation of the hypercube in high dimension.
\end{abstract}

\hspace{1em}Keywords: giant component, product graph, percolation, random graph, sharp threshold

\hspace{1em}MSC Class: 05C76, 05C80

\section{Introduction}
The field of random graphs was born in a series of papers of Erd\H{o}s and Rényi~\cite{ER2, ER1, ER3}. The paper~\cite{ER1} concentrates in particular on the existence of a giant component in the random graphs $\mathcal G(n,M)$ and $\mathcal G(n, p)$, that is, a connected component that contains a constant proportion of all $n$ vertices in the graph. In the $\mathcal G(n,M)$ model, $M$ edges are chosen among all $\binom{n}{2}$ pairs of vertices uniformly at random to form a random graph with exactly $M$ edges, while in the $\mathcal G(n,p)$ model every pair of vertices forms an edge with probability $p$ in the final graph independently from all other pairs (or equivalently, $G\in \mathcal G(n,p)$ is a random subgraph of the complete graph on $n$ vertices after $p$-percolation of its edges). In~\cite{ER1}, Erd\H{o}s and Rényi proved the following (by now very classical) result: for any $\varepsilon > 0$, if $M\le (1-\varepsilon) n/2$, then all connected components in the random graph $G\in \mathcal G(n,M)$ have $O(\log n)$ vertices asymptotically almost surely (a.a.s.), while if $M\ge (1+\varepsilon) n/2$, then the largest component in the random graph $G\in \mathcal G(n,M)$ contains $\Omega(n)$ vertices but the second largest contains only $O(\log n)$ vertices a.a.s. Later Bollob{\'a}s~\cite{Bol1} and {\L}uczak~\cite{Luc1} made a precise analysis of the more complicated regime when $M = n/2 + o(n)$ and exhibited a critical window around $M = n/2$ of width of order $\Theta(n^{2/3})$, in which a number of connected components with $\Theta(n^{2/3})$ vertices in each happen to coexist a.a.s. All results above have natural analogues for $\mathcal G(n,p)$. Aldous~\cite{Ald} later made a beautiful connection between the sizes of the connected components in the critical regime in $\mathcal G(n,p)$ and the zeros of a Brownian motion with a suitable drift.

In fact, percolation of finite graphs was considered in many particular cases. Another classical example is the \emph{hypercube in dimension $n$}, denoted by $H_n$. The graph $H_n$ has vertices $\{0,1\}^n$ and two vertices $u$ and $v$ are connected by an edge if they differ in exactly one entry. In~\cite{ES} Erd\H{o}s and Spencer showed that if $p\le (1-\varepsilon)/n$, then a.a.s. $p$-percolation of $H_n$ leaves a graph with largest component, containing at most $o(2^n)$ vertices, and they conjectured that a component with $\Omega(2^n)$ vertices is a.a.s. present if $p\ge (1+\varepsilon)/n$. This conjecture was confirmed by Ajtai, Koml{\'o}s and Szemer{\'e}di in~\cite{AKS}. A following series of papers of Bollob{\'a}s, Kohayakawa and {\L}uczak~\cite{BKL}, Borgs, Chayes, van der Hofstad, Slade and Spencer~\cite{BCVdHSS1, BCVdHSS2, BCVdHSS3}, van der Hofstad and Nachmias~\cite{vdHN}, and Hulshof and Nachmias~\cite{HN} provides a deep understanding of the critical percolation on the hypercube in high dimension.

To the best of our knowledge there were two attempts for generalising the sharp threshold phenomenon for the existence of a giant component for large families of finite graphs. Chung, Horn and Lu~\cite{CHL} showed the existence of a sharp threshold under several conditions involving the spectrum of the adjacency matrix of the base graph. Sadly, their conditions are not satisfied for the hypercube $H_n$, see~\cite{Chu}. Alon, Benjamini and Stacey~\cite{ABC} proved the existence of a sharp threshold in expanders of uniformly bounded degree. Here as well, although the hypercube $H_n$ has indeed good expansion properties~\cite{Til}, its degree goes to infinity with $n$.

Our goal in this paper is to generalise the existence of a sharp threshold for the appearance of a giant component for Cartesian products of graphs under two assumptions: on the maximum degrees and on the isoperimetric constants of the graphs in the product. In particular, our result ensures the existence of a sharp threshold for the appearance of a giant component for the Cartesian product of any sequence of $n$ connected graphs with uniformly bounded orders, the hypercube $H_n$ being a particular case of the latter. We believe it is worth making the connection with Joos~\cite{Joo2}, who studies the threshold probability for connectivity of percolated sparse graphs and, as a corollary, completely solves the problem for Cartesian powers of a graph $G$.

\subsection{Notation and terminology}
For every positive integer $n$, we denote by $[n]$ the set $\{1,2,\dots,n\}$. In this paper, for any three positive real numbers $a,b,c$, by $a/bc$ or $a/b\cdot c$ we mean $a/(bc)$.

For a graph $G$, the \emph{order} of $G$ is the cardinality of its vertex set $V(G)$, and the \emph{size} of $G$ is the cardinality of its edge set $E(G)$. For a vertex $v\in V(G)$, we denote by $\deg_G(v)$, or just $\deg(v)$, the degree of $v$ in $G$, and by $CC_G(v)$, or just $CC(v)$, the connected component of $v$ in $G$. Then, the \emph{average degree} of $G$ is defined by
\begin{equation*}
\bard(G) = \dfrac{1}{|V(G)|}\sum_{v\in V(G)} \deg_G(v).
\end{equation*}
The maximum degree of a graph $G$ is denoted $\Delta(G)$, and the order of the largest connected component in $G$ is denoted $L_1(G)$. Finally, for any graph $G$ with $|V(G)|\ge 2$, the \emph{isoperimetric constant} of $G$ is given by
\begin{equation*}
    i(G) = \min_{\substack{S\subseteq V(G);\\ 1\le |S|\le |V(G)|/2}} \dfrac{|\partial S|}{|S|},
\end{equation*}
where $\partial S = \partial_G S$ is the set of edges in $G$ between a vertex in $S$ and a vertex in $V(G)\setminus S$. Clearly if $L_1(G) < |V(G)|$, then $i(G) = 0$. For a set $S\subseteq V(G)$, we also denote by $N_G(S)$, or just $N(S)$, the set of vertices in $G$ at graph distance 1 from $S$ in $G$, and also $N_G[S]$, or simply $N[S]$, is defined as $S\cup N(S)$.

For any sequence of $n$ graphs $G_1, G_2, \dots, G_n$, the \emph{Cartesian product of $G_1, G_2,\dots, G_n$}, denoted by $G_1\square G_2\square \dots \square G_n$ or $\square_{i\in [n]} G_i$, is the graph with vertex set
\begin{equation*}
    \{(v_1, v_2, \dots, v_n)\hspace{0.2em}|\hspace{0.2em} \forall i\in [n], v_i\in V(G_i)\}
\end{equation*}
and edge set
\begin{equation*}
    \{(u_1, u_2, \dots, u_n) (v_1, v_2, \dots, v_n)\hspace{0.2em}|\hspace{0.2em} \exists i\in [n], \forall j\neq i, u_j=v_j\text{ and } u_iv_i\in E(G_i)\}.
\end{equation*}

A $p$-percolation of a graph $G$ is a random process in which every edge in $G$ is retained with probability $p$ and deleted with probability $1-p$, independently from all other edges. If an edge is retained, we say that it is \emph{open}, and if it is deleted, we say that it is \emph{closed}. The graph consisting of all open edges is a random subgraph of $G$, which we denote by $G_p$. 

For a sequence of probability spaces $(\Omega_n, \mathcal F_n, \mathbb P_n)_{n\geq 1}$ and a sequence of events $(A_n)_{n\geq 1}$, where $A_n\in \mathcal F_n$ for every $n\geq 1$, we say that $(A_n)_{n\geq 1}$ happens \emph{asymptotically almost surely} or \emph{a.a.s.} if $\underset{n\to +\infty}{\lim}\mathbb P_n(A_n) = 1$. The sequence of events $(A_n)_{n\geq 1}$ itself is said to be \emph{asymptotically almost sure} or again \emph{a.a.s}. 

Our main result, Theorem~\ref{main thm}, is of asymptotic nature. Therefore, below we use the well-known asymptotic notations $o, O, \Omega$ and $\Theta$. For two functions $f,g:\mathbb N\to \mathbb R^+$ we also write $f(n)\ll g(n)$ or $g(n)\gg f(n)$ if $f(n) = o(g(n))$. Moreover, if the limit variable is not $n$, we will indicate this using lower indices such as $O_x$.

\subsection{Our result}
Throughout the paper we fix two absolute constants $\gamma > 0$ and $C\in \mathbb N$ (that is, these constants do not depend on any other parameters in the sequel). Let $(G_{n,j})_{n\in \mathbb N, j\in [n]}$ be finite connected graphs with at least one edge such that, for every $n\in \mathbb N$ and $j\in [n]$:
\begin{enumerate}
    \item\label{cn 1} $\Delta(G_{n,j})\le C$, and
    \item\label{cn 2} $i(G_{n,j})\ge n^{-\gamma}$.
\end{enumerate}
Define $G_{[n]} = \square_{j\in [n]} G_{n,j}$. In the sequel we write $G_j$ for $G_{n,j}$, $G$ for $G_{[n]}$, and $\bard$ for $\bard(G_{[n]})$, hopefully taking enough care to ensure that no confusion arises due to this abuse of notation. We insist that we reserve the notation $G_p$ for the subgraph of $G$ after $p$-percolation, so $G_p$ is not part of $(G_j)_{j\in [n]}$. For any vertex $v\in V(G)$ we denote by $CC_p(v) = CC_{G_p}(v)$. Since a vertex $v = (v_1, \dots, v_n)\in V(G)$ has degree $\sum_{j=1}^n
\deg_{G_j}(v_j)$, we conclude that 
\begin{equation}\label{eq 1}
    \bard = \sum_{j=1}^n \bard(G_j).
\end{equation}
Now we present the main result of the paper.

\begin{theorem}\label{main thm}
Fix $\varepsilon\in (0,1)$. In the above setup:
\begin{enumerate}
    \item[a)] if $p = (1-\varepsilon)/\bard$, then a.a.s. $L_1(G_p)\le \exp\left(-\dfrac{\varepsilon^2 n}{9 C^2}\right) |V(G)|$, and
    \item[b)] if $p = (1+\varepsilon)/\bard$, then there is a positive constant $c_1 = c_1(\varepsilon, \gamma, C)$ such that a.a.s. $L_1(G_p)\ge c_1 |V(G)|$.
\end{enumerate}
\end{theorem}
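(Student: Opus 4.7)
For part (a), the plan is a first-moment argument combined with concentration of the vertex degrees in $G$. Observe that for $v$ chosen uniformly in $V(G)$, the degree $\deg_G(v) = \sum_{j\in[n]} \deg_{G_j}(v_j)$ is a sum of $n$ independent bounded random variables with mean $\bar d$, so Bernstein's inequality applied with deviation $\varepsilon\bar d/3$ implies that the number of \emph{atypical} vertices (those with $\deg_G(v) > (1+\varepsilon/3)\bar d$) is at most $\exp(-c\varepsilon^2 n/C^2)|V(G)|$ for some constant $c > 1/9$. Since this is strictly smaller than the target threshold $s^* := \exp(-\varepsilon^2 n/9C^2)|V(G)|$, any BFS exploration of $CC_p(v)$ that reaches $s^*$ vertices contains at most $o(s^*)$ atypical ones, so the total degree $T$ of the first $s^*$ explored vertices satisfies $T \le s^*(1+\varepsilon/3)\bar d(1+o(1))$. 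Coupling the percolation with i.i.d.\ Bernoulli$(p)$ marks on the edges and applying multiplicative Chernoff gives
$$\mathbb P(|CC_p(v)|\ge s^*) \le \mathbb P(\mathrm{Bin}(T,p)\ge s^*-1) \le \exp(-\Omega(\varepsilon^2 s^*)),$$
which is doubly exponentially small in $n$ since $s^* \ge (2e^{-\varepsilon^2/9C^2})^n$ grows exponentially. Summing over $v$ and invoking the standard observation that $L_1 \ge s^*$ implies $|\{v : |CC_p(v)| \ge s^*\}| \ge s^*$ then yields $\mathbb P(L_1 \ge s^*) \to 0$.

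Part (b) follows the classical Ajtai--Koml\'os--Szemer\'edi two-stage ``exploration + sprinkling'' strategy. Split $p = p_1 + p_2 - p_1 p_2$ with $p_1 = (1+\varepsilon/2)/\bar d$ and $p_2 = \Theta(\varepsilon/\bar d)$, so that $G_p$ is distributed as $G_{p_1} \cup G'_{p_2}$ for two independent percolations. In a first step (emergence of moderately large components), for a typical vertex $v$ with $\deg_G(v) \ge (1-\varepsilon/3)\bar d$ (atypical ones again forming an exponentially small fraction by Bernstein), the BFS of $CC_{p_1}(v)$ stochastically dominates from below a supercritical Galton--Watson tree with offspring mean at least $(1-\varepsilon/3)(1+\varepsilon/2) > 1+\varepsilon/10$, and standard survival arguments yield $\mathbb P(|CC_{p_1}(v)|\ge L^*) \ge \rho(\varepsilon,C) > 0$ for some threshold $L^*$ polynomial in $n$; an edge-exposure Azuma--Hoeffding bound then concentrates the count of vertices in such components around $\ge \rho|V(G)|$. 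In a second step (isoperimetric inequality for $G$), derive $i(G) \ge n^{-\gamma-O(1)}$ from $i(G_j) \ge n^{-\gamma}$ by a standard coordinate-decomposition argument, applying $i(G_j)$ to the $j$-slices of any $S\subseteq V(G)$. In a third step (sprinkling), let $\mathcal L$ be the collection of large $G_{p_1}$-components: for any union $S$ of components in $\mathcal L$ with $L^* \le |S| \le |V(G)|/2$, one has $|\partial_G S| \ge i(G)|S|$, and because the residual $V(G) \setminus V(\mathcal L)$ is small, a positive fraction of these boundary edges must connect $S$ to another component of $\mathcal L$. The probability that $G'_{p_2}$ opens none of them is $\exp(-\Omega(p_2\, i(G)\, L^*))$; a union bound over partitions of $\mathcal L$ shows that a.a.s.\ all large components merge into a single giant of size $\ge \rho|V(G)|/2 =: c_1|V(G)|$.

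The main obstacle is Step~3 of part~(b): because $|\mathcal L|$ can be as large as $|V(G)|/L^*$ (so the number of partitions of $\mathcal L$ is doubly exponential in $n$) while $p_2\, i(G)$ is only inverse-polynomial in $n$, a naive single-round sprinkling requires $L^*$ of order $\sqrt{|V(G)|}\cdot\mathrm{poly}(n)$, which Step~1 does not directly provide. Overcoming this likely requires iterating the sprinkling over $\Theta(n)$ successive rounds, roughly doubling the scale of the largest components at each round until reaching the target size $\Theta(|V(G)|)$; this is a delicate bookkeeping that must be tuned against the isoperimetric constant $i(G)$ and the parameters $\varepsilon,\gamma,C$. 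A secondary difficulty will be to confirm that the residual $V(G)\setminus V(\mathcal L)$ is small enough at every scale that most boundary edges of each intermediate $S$ indeed reach another large component rather than getting absorbed into the residual.
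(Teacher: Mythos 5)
Your part~(a) is essentially correct, and it is a different (more direct) route than the paper's: instead of separately exploring the components of the high-degree vertices and then dominating the rest by a subcritical branching process, you observe that the target bound $s^*=\exp(-\varepsilon^2 n/9C^2)|V(G)|$ is so large that a first-moment/counting argument suffices: a component reaching $s^*$ vertices forces at least $s^*-1$ open edges among at most $(1+o(1))(1+\varepsilon/3)\bar d\, s^*$ sequentially revealed edges, which is exponentially unlikely in $s^*$. Provided you tune the degree-deviation constant so that the atypical vertices really number $o(s^*)$ (with deviation $\varepsilon\bar d/3$ and plain Hoeffding you only get exponent $\varepsilon^2 n/18C^2$, which is \emph{not} below the $1/9$ threshold; deviation $\varepsilon\bar d/2$, as in the paper, or a genuine Bernstein bound fixes this), the argument goes through. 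The paper's two-process argument buys a much stronger conclusion (components avoiding high-degree vertices have only $O(\log|V(G)|)$ vertices), but for the stated bound your shortcut is fine.

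Part~(b) has a genuine gap, which you partially acknowledge but do not resolve, and your proposed fix points in the wrong direction. First, your Step~1 claims components of size $L^*$ ``polynomial in $n$'' from a single supercritical Galton--Watson comparison. That comparison only survives while the number of processed vertices is small compared to the degree $\bar d=\Theta(n)$: once $\Theta(n)$ vertices are explored you can no longer guarantee that the current vertex has many unexplored neighbours, so the naive domination yields only $L^*=\Theta(n)$. Growing components from $\Theta(n)$ to $\Theta(n^k)$ is exactly where the product structure must be used: the paper does this by an iterated multi-round exposure (its Lemmas~\ref{AKS lem 2}--\ref{AKS lemma plus}), moving along the $n$ coordinate directions of $v$ into disjoint projection subgraphs $H_i\cong\square_{j>i}G_j$, running the size-$\Theta(n)$ argument independently in each, and sprinkling to attach $\Omega(n)$ of these cells, gaining a factor $n$ per round; a \emph{constant} number of rounds (depending on $\gamma$) reaches size $\Omega(n^k)$ with $k=\lceil 1+\gamma\rceil+3$. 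Second, your Step~3 union bound does not close as stated, and the obstacle is not that $L^*$ must be of order $\sqrt{|V(G)|}$ nor that one needs $\Theta(n)$ successive doubling rounds. The missing idea is the paper's Observation~\ref{ob discrete cont}: it suffices to rule out \emph{balanced} empty cuts, i.e.\ partitions $(V_1,V_2)$ of the union of the large $G_{p_1}$-components with both sides of size $\Theta(|V(G)|)$. For such a cut the isoperimetric bound $i(G)\ge n^{-\gamma}/2$ applies to a linear-sized set, giving $\Omega(n^{-\gamma}|V(G)|)$ boundary edges, hence (using that typical boundary vertices have $\Omega(n)$ edges into $V_1\cup V_2$, so a two-edge sprinkled path succeeds with probability $\Theta(1/n)$, and passing to $\Omega(n^{-\gamma-1}|V(G)|)$ disjoint such connectors) a per-partition failure probability $\exp(-\Omega(|V(G)|/n^{\gamma+2}))$. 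This beats the $2^{O(|V(G)|/n^k)}$ partitions precisely because $k\ge\gamma+3$, which is why the polynomial component size from the iterated construction, and not anything close to $\sqrt{|V(G)|}$, is what the argument needs. Without these two ingredients (the iterated growth to $n^k$ and the balanced-cut formulation of the sprinkling step), your outline does not yield a proof.
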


\begin{remark}\label{rem 2.1}
One may replace the constant $C$ with a function $n^{\alpha}$ in condition~\ref{cn 1}, where $\alpha = \alpha(\gamma)$ is a positive constant, and Theorem~\ref{main thm} will still be valid. We present the proof only of the given more simplified version of Theorem~\ref{main thm} for two reasons: first, the idea of the proof is the same and this more general version would only make the exposition more technical, and second, we believe that even this more general framework does not fully explain the existence of a sharp threshold for the giant component problem for product graphs.
\end{remark}

Let us give a quick overview of the proof of Theorem~\ref{main thm}. The first point concerns the study of two subcritical exploration processes. The first one ensures an upper bound on the order of the union of all components, containing at least one vertex of ``high'' degree. The second process deals with the remaining ``low'' degree vertices conditionally on the edges, exposed during the first process, and is therefore directly dominated by a subcritical branching process. The proof of the second point is inspired by the special case of the hypercube, studied in~\cite{AKS}. It relies on consecutively constructing connected components with larger and larger polynomial orders via the technique of two-round exposure (or rather multi-round exposure in our case). Once the correct polynomial order is attained, we show by the same technique that the condition~\ref{cn 2} on the isoperimetric constants of the graphs in the product ensures that a constant proportion of the above components are merged together in $G_p$ a.a.s.

The paper is organised as follows. In Section~\ref{sec prelims} we introduce several preliminary results. Then, in Section~\ref{sec pt 1 proof} we prove Theorem~\ref{main thm}~a), and in Section~\ref{sec pt 2 proof} we prove Theorem~\ref{main thm}~b). Finally, Section~\ref{sec discussion} is dedicated to a discussion and a couple of open questions.

\section{Preliminaries}\label{sec prelims}

\subsection{Probabilistic preliminaries}

\textbf{Chernoff's inequality:} We first state a version of the famous Chernoff's inequality, see e.g. (\cite{JLR}, Theorem 2.1).

\begin{lemma}[\cite{JLR}, Theorem 2.1]\label{chernoff}
Let $X \sim \mathrm{Bin}(n,p)$ be a Binomial random variable with parameters $n$ and $p$. Then, for any $t \ge 0$ we have
\begin{eqnarray*}
\mathbb P( X \ge \mathbb E[X] + t ) &\le& \exp \left( - \frac {t^2}{2 (\mathbb E[X] + t/3)} \right), \text{ and}\\
\mathbb P( X \le \mathbb E[X] - t ) &\le& \exp \left( - \frac {t^2}{2 \mathbb E[X]} \right).
\end{eqnarray*}
\end{lemma}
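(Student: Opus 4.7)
The plan is to prove both bounds by the classical Chernoff method: bound the moment generating function and then optimise the exponential Markov inequality.

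For the upper tail, I would start from the identity $\mathbb{E}[e^{\lambda X}] = (1-p+pe^{\lambda})^n$, valid for any $\lambda \in \mathbb{R}$, and use $1+y \leq e^y$ to obtain the cleaner bound $\mathbb{E}[e^{\lambda X}] \leq \exp\bigl(\mu(e^{\lambda}-1)\bigr)$, where $\mu = \mathbb{E}[X] = np$. For $\lambda > 0$, Markov's inequality applied to $e^{\lambda X}$ then yields
\[
\mathbb{P}(X \geq \mu + t) \leq \exp\bigl(\mu(e^{\lambda}-1) - \lambda(\mu+t)\bigr).
\]
Optimising in $\lambda$ gives $\lambda^\ast = \ln(1+t/\mu)$, and substituting leads to
\[
\mathbb{P}(X \geq \mu + t) \leq \exp\bigl(-\mu\, \varphi(t/\mu)\bigr), \qquad \varphi(x) := (1+x)\ln(1+x) - x.
\]

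To finish the upper tail, I would verify the elementary analytic inequality $\varphi(x) \geq \frac{x^2}{2+2x/3}$ for all $x \geq 0$, since this rearranges to exactly $\mu\,\varphi(t/\mu) \geq \frac{t^2}{2(\mu + t/3)}$. This can be checked either by showing that the difference vanishes to second order at $x=0$ and has nonnegative derivative afterwards, or by comparing the series $\varphi(x) = \sum_{k \geq 2} \frac{(-1)^k x^k}{k(k-1)}$ with that of $x^2/(2+2x/3)$ term by term. For the lower tail, I would run the same argument with $\lambda < 0$: writing $\lambda = -\theta$, the optimisation gives $e^{-\theta^\ast} = 1 - t/\mu$ and produces
\[
\mathbb{P}(X \leq \mu - t) \leq \exp\bigl(-\mu\, \psi(t/\mu)\bigr), \qquad \psi(x) := x + (1-x)\ln(1-x).
\]
Here the analytic step is much easier: $\psi(0) = \psi'(0) = 0$ and $\psi''(x) = 1/(1-x) \geq 1$ on $[0,1)$, so integration twice gives $\psi(x) \geq x^2/2$, and hence the bound $\exp(-t^2/(2\mu))$.

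The only genuinely nontrivial step is the tightness of the constant $2+2x/3$ in the denominator of the upper-tail bound: a lazy Taylor expansion of $\varphi$ yields only the weaker denominator $2+2x$, so extracting the $2+2x/3$ requires the slightly careful elementary inequality described above. Everything else reduces to differentiating, solving $f'(\lambda) = 0$, and comparing smooth convex functions on $[0, \infty)$ or $[0,1)$.
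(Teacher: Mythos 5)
Your proof is correct: the exponential-Markov argument with $\mathbb{E}[e^{\lambda X}]\le \exp(\mu(e^{\lambda}-1))$, the optimisation giving $\varphi(x)=(1+x)\ln(1+x)-x$ and $\psi(x)=x+(1-x)\ln(1-x)$, and the elementary inequalities $\varphi(x)\ge \frac{x^2}{2+2x/3}$ and $\psi(x)\ge x^2/2$ are exactly what is needed, and each step checks out (including $\psi'(x)=-\ln(1-x)$, $\psi''(x)=1/(1-x)\ge 1$). The paper itself gives no proof -- it simply cites Theorem 2.1 of Janson, \L{}uczak and Ruci\'nski -- and your derivation is the standard one underlying that reference, so there is nothing further to reconcile.
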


\vspace{1em}

\noindent
\textbf{The bounded difference inequality:} The following well-known inequality is a simple consequence of the Azuma-Hoeffding martingale inequality, see~\cite{JLR} or also~\cite{Mac} for an improvement.

\begin{theorem}[The bounded difference inequality, see e.g.~\cite{JLR}]\label{azuma}
Consider a sequence $(X_i)_{i\in [n]}\in \prod_{i\in [n]} \Lambda_i$ of $n$ independent random variables. Fix a function $f:\prod_{i\in [n]} \Lambda_i\to \mathbb R$ and suppose that there exist $(C_i)_{i\in [n]}$ such that, for every $i\in [n]$, $(x_j)_{j\in [n]}\in \prod_{j\in [n]} \Lambda_j$ and $x'_i\in \Lambda_i$, we have
\begin{equation*}
    |f(x_1, \dots, x_{i-1}, x_i, x_{i+1}, \dots, x_n) - f(x_1, \dots, x_{i-1}, x'_i, x_{i+1}, \dots, x_n)|\le C_i.
\end{equation*}
Then, for every $t\ge 0$,
\begin{equation*}
\mathbb P(|f(X_1,\dots,X_n) - \mathbb E[f(X_1,\dots,X_n)]|\ge t)\le 2\exp\left(- \dfrac{t^2}{2\sum_{i\in [n]} C^2_i}\right).
\end{equation*}
\end{theorem}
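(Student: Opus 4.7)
The plan is to derive the inequality from the Azuma--Hoeffding martingale concentration inequality by constructing the Doob martingale of $f(X_1,\dots,X_n)$ and checking that its increments are bounded by the given constants $C_i$. Concretely, set $\mathcal F_i=\sigma(X_1,\dots,X_i)$ and $Z_i=\mathbb E[f(X_1,\dots,X_n)\mid \mathcal F_i]$ for $i=0,1,\dots,n$, so that $(Z_i)_{i=0}^n$ is a martingale with respect to $(\mathcal F_i)$, with $Z_0=\mathbb E[f(X_1,\dots,X_n)]$ (a constant) and $Z_n=f(X_1,\dots,X_n)$.

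The main step is to establish that $|Z_i-Z_{i-1}|\le C_i$ almost surely for every $i\in[n]$. Exploiting independence of the $X_j$ and Fubini's theorem, I would write
\[
Z_i=g_i(X_1,\dots,X_i),\quad\text{where}\quad g_i(x_1,\dots,x_i)=\mathbb E[f(x_1,\dots,x_i,X_{i+1},\dots,X_n)].
\]
Since $X_i$ is independent of $\mathcal F_{i-1}$, one obtains $Z_{i-1}=\mathbb E_{X_i'}[g_i(X_1,\dots,X_{i-1},X_i')]$, where $X_i'$ is an independent copy of $X_i$. Therefore
\[
Z_i-Z_{i-1}=\mathbb E_{X_i'}\!\left[g_i(X_1,\dots,X_{i-1},X_i)-g_i(X_1,\dots,X_{i-1},X_i')\right].
\]
The difference inside the expectation equals $\mathbb E\bigl[f(X_1,\dots,X_i,\dots,X_n)-f(X_1,\dots,X_i',\dots,X_n)\mid X_1,\dots,X_i,X_i'\bigr]$, in which only the $i$-th coordinate of the argument of $f$ changes; by hypothesis this is bounded by $C_i$ in absolute value. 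Taking the absolute value outside the expectation in $X_i'$ and using Jensen (or monotonicity) yields $|Z_i-Z_{i-1}|\le C_i$.

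The last step is a direct invocation of Azuma--Hoeffding for the martingale $(Z_i)_{i=0}^n$ with increments bounded by $C_i$: for every $t\ge 0$,
\[
\mathbb P(|Z_n-Z_0|\ge t)\le 2\exp\!\left(-\frac{t^2}{2\sum_{i\in[n]}C_i^2}\right),
\]
which is exactly the asserted inequality upon substituting $Z_n-Z_0=f(X_1,\dots,X_n)-\mathbb E[f(X_1,\dots,X_n)]$. The only real obstacle is the martingale increment bound: it is tempting but incorrect to simply plug in the almost sure bound $|f(\cdots,X_i,\cdots)-f(\cdots,X_i',\cdots)|\le C_i$, since $Z_i-Z_{i-1}$ is a conditional expectation of such a difference rather than the pointwise difference itself. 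Careful bookkeeping of which variables are integrated out \emph{before} conditioning, as above, resolves this and is the heart of the argument; the rest is a black-box application of a standard martingale inequality.
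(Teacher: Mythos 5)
Your proof is correct and is the standard derivation of McDiarmid's bounded difference inequality: build the Doob martingale $Z_i=\mathbb E[f\mid X_1,\dots,X_i]$, use independence to represent $Z_{i-1}$ as an average of $Z_i$ over an independent copy of $X_i$, bound the increments by $C_i$ via the pointwise hypothesis and Jensen, and invoke Azuma--Hoeffding. The paper itself does not prove this theorem but merely cites \cite{JLR} and \cite{Mac}, and the references give essentially the same Doob-martingale argument, so you have reproduced the intended proof; your cautionary remark about why one cannot just plug the pointwise bound directly into the increment is also exactly the point that makes the independence hypothesis indispensable.
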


\vspace{1em}

\noindent
\textbf{The Bienaym\'e-Galton-Watson random tree:} By now a very well-known and studied model is the \emph{Bienaym\'e-Galton-Watson random tree}, or \emph{BGW tree}. Let $\nu$ be a probability distribution over $\mathbb N\cup \{0\}$ and let $X$ be a random variable with distribution $\nu$. The BGW tree with progeny distribution $\nu$ is constructed as follows. Starting from a vertex $v_0$ (the root), every vertex gives birth (just once) to a random number of children, distributed according to $\nu$ and independent from all other vertices in the tree. The BGW tree is \emph{subcritical} if $\mathbb E[X] < 1$, \emph{supercritical} if $\mathbb E[X] > 1$, and \emph{critical} otherwise. It is a basic fact in the theory of branching processes that a subcritical BGW tree is almost surely finite while a supercritical BGW tree has strictly positive probability to be infinite. The next lemma makes the first statement more precise by giving a probabilistic estimate on the size of a subcritical BGW tree, see e.g.~(\cite{Bla}, Theorem 2.3.2).

\begin{lemma}[\cite{Bla}, Theorem 2.3.2]\label{size lem}
Let $T$ be a subcritical BGW tree such that $\mathbb E[s^X] < +\infty$ for some $s > 1$. Define
\begin{equation*}
    h_{\nu} = \sup_{\theta > 0} \left(\theta - \log\left(\mathbb E[\exp(\theta X)]\right)\right).
\end{equation*}
Then, for every $k\ge 1$ we have
\begin{equation*}
    \mathbb P(|V(T)|\ge k)\le \exp(-kh_{\nu}).
\end{equation*}
\end{lemma}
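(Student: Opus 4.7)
The plan is to control the size of the subcritical BGW tree via its exploration walk and then apply a Cram\'er-type exponential-moment bound.

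First I would set up the standard queue/exploration process. Let $X_1, X_2, \ldots$ be independent copies of $X$ (interpreting $X_i$ as the number of children of the $i$-th explored vertex in, say, breadth-first order), and define the walk
\begin{equation*}
S_n = 1 + \sum_{i=1}^n (X_i - 1), \qquad n \ge 0,
\end{equation*}
tracking the number of vertices still waiting to be explored. Then $|V(T)|$ is the first $n \ge 1$ for which $S_n = 0$, and in particular the event $\{|V(T)| \ge k\}$ is contained in $\{S_{k-1} \ge 1\}$, which after rearrangement reads $\{X_1 + \cdots + X_{k-1} \ge k - 1\}$.

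Next I would apply Chernoff's exponential-moment trick to this last event. For any $\theta > 0$ inside the interval where $\mathbb E[e^{\theta X}]$ is finite (nonempty thanks to the assumption $\mathbb E[s^X] < \infty$ for some $s > 1$), independence of the $X_i$'s gives
\begin{equation*}
\mathbb P(X_1 + \cdots + X_{k-1} \ge k-1) \le e^{-\theta(k-1)}\, \mathbb E[e^{\theta X}]^{k-1} = \exp\bigl(-(k-1)(\theta - \log \mathbb E[e^{\theta X}])\bigr).
\end{equation*}
Taking the supremum over admissible $\theta > 0$ produces the rate $h_\nu$ from the statement, yielding the bound $\exp(-(k-1) h_\nu)$; the claimed $\exp(-k h_\nu)$ differs only by an absorbable factor and can be recovered either by a small shift of the indexing of the walk or, if convenient, by replacing $h_\nu$ by a slightly smaller but equally positive constant.

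The main (mild) obstacle is justifying the last optimization: one must check that $h_\nu > 0$ and that the supremum is effectively attained at some interior $\theta^\ast > 0$ within the region where the moment generating function is finite. This follows from the smoothness and convexity of $\theta \mapsto \log \mathbb E[e^{\theta X}]$ on a neighborhood of $0$ together with the fact that its derivative at $0$ equals $\mathbb E[X] < 1$, so $\theta \mapsto \theta - \log \mathbb E[e^{\theta X}]$ is strictly increasing on a right neighborhood of $0$. Combined with $\mathbb E[s^X] < \infty$, which guarantees a nontrivial domain of finiteness, this delivers a strictly positive $h_\nu$ and closes the argument.
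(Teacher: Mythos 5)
The paper itself gives no proof of this lemma --- it is quoted directly from \cite{Bla} --- so your argument has to stand on its own. Its core is the standard route and is sound: with the exploration walk $S_n=1+\sum_{i\le n}(X_i-1)$ one has $\{|V(T)|\ge k\}\subseteq\{S_{k-1}\ge 1\}=\{X_1+\cdots+X_{k-1}\ge k-1\}$, and the exponential Markov bound gives $\mathbb P(|V(T)|\ge k)\le \exp\bigl(-(k-1)(\theta-\log\mathbb E[e^{\theta X}])\bigr)$ for every $\theta\in(0,\log s)$, hence $\le\exp(-(k-1)h_\nu)$ after taking the infimum. One remark on your last paragraph: you do not need the supremum to be attained at an interior $\theta^\ast$ (for Bernoulli offspring it is not attained at all); the bound holds for each admissible $\theta$ separately, and all you need is $h_\nu>0$, which your convexity/derivative-at-zero argument gives correctly.

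The genuine gap is the claim that the off-by-one is ``absorbable''. The constant $h_\nu$ is fixed by the statement, so neither re-indexing the walk nor replacing $h_\nu$ by a smaller constant converts $\exp(-(k-1)h_\nu)$ into $\exp(-kh_\nu)$. Indeed, the inequality with $e^{-kh_\nu}$, read literally, cannot hold: at $k=1$ the left-hand side is $1$ while $e^{-h_\nu}<1$; and for Bernoulli$(p)$ offspring with $p<1$ one computes $h_\nu=\log(1/p)$ while $\mathbb P(|V(T)|\ge k)=p^{k-1}=e^{-(k-1)h_\nu}>e^{-kh_\nu}$ for every $k$. So what your argument proves --- and the sharpest bound available --- is $\mathbb P(|V(T)|\ge k)\le e^{-(k-1)h_\nu}$, equivalently $\mathbb P(|V(T)|>k)\le e^{-kh_\nu}$; the version displayed in the paper is best regarded as an inessential transcription of the reference. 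The honest fix is simply to state the $(k-1)$-version: it loses nothing downstream, since Corollary~\ref{size cor} only extracts the rate $\exp(-(1+o(1))k\phi)$ and then gives away a further factor of two, for which $e^{-(k-1)h_\nu}$ suffices verbatim.
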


Fix $\varepsilon\in (0,1)$. We will use the above result in the particular case when $X\sim \Bin(n, p)$ with $p = (1-\varepsilon)/n$. We have
\begin{equation*}
    \mathbb E[\exp(\theta X)] = \sum_{i=0}^n \binom{n}{k} \exp(\theta k) p^k (1-p)^{n-k} = (1-p+\exp(\theta) p)^n.
\end{equation*}
Then, using that $\log(1+\theta) = \theta + O_{\theta}(\theta^2)$, as $n\to +\infty$ we have
\begin{align*}
    h_X =\hspace{0.3em} &\sup_{\theta > 0} (\theta - n\log(1-p+\exp(\theta) p))\\ 
    =\hspace{0.3em} &\sup_{\theta > 0} (\theta - n(-p+\exp(\theta) p + O(1/n^2)))\\
    =\hspace{0.3em} &\sup_{\theta > 0} (\theta + 1 - \varepsilon - (1-\varepsilon)\exp(\theta) + O(1/n)).
\end{align*}

Since $\exp(\theta) = 1+\theta+O_{\theta}(\theta^2)$, the latter quantity tends to a constant $\phi = \phi(\varepsilon) > 0$ as $n\to +\infty$, where
\begin{equation}\label{eq phi}
    \phi(\varepsilon) = \sup_{\theta > 0} (\theta + 1 - \varepsilon - (1-\varepsilon)\exp(\theta)).
\end{equation}

\begin{corollary}\label{size cor}
The BGW tree $T$ with progeny distribution $\Bin(n, (1-\varepsilon)/n)$ satisfies
\begin{equation*}
    \mathbb P(|V(T)|\ge k)\le \exp(-(1+o(1))k\phi).
\end{equation*}
In particular, for every $\varepsilon \in (0,1)$ there is $k_0 = k_0(\varepsilon)\in \mathbb N$ such that for every $k\ge k_0$ we have
\begin{equation*}
    \mathbb P(|V(T)|\ge k)\le \exp(-k\phi/2).
\end{equation*}
\end{corollary}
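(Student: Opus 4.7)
The corollary is a direct application of Lemma~\ref{size lem} together with the computation of $h_X$ carried out in the paragraphs immediately preceding its statement, so the plan splits cleanly into three short steps.

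First, I would check the hypotheses of Lemma~\ref{size lem} for $X\sim\Bin(n,(1-\varepsilon)/n)$: the mean is $1-\varepsilon<1$ so the tree $T$ is subcritical, and since $X$ is bounded by $n$ all exponential moments are finite (any $s>1$ works uniformly in $n$). Lemma~\ref{size lem} therefore yields $\mathbb{P}(|V(T)|\ge k)\le\exp(-kh_X)$ for every $k\ge 1$. Plugging in the estimate $h_X=\phi+O(1/n)$ derived just above \eqref{eq phi} then gives $\mathbb{P}(|V(T)|\ge k)\le\exp(-(1+o(1))k\phi)$, which is exactly the first bound.

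Next, to obtain the cleaner quantitative version, I would verify that $\phi(\varepsilon)>0$. Differentiating the function $\theta\mapsto\theta+1-\varepsilon-(1-\varepsilon)e^{\theta}$ on $(0,+\infty)$, the unique critical point is $\theta^{*}=-\log(1-\varepsilon)$, which lies in $(0,+\infty)$ since $\varepsilon\in(0,1)$; substituting back yields $\phi(\varepsilon)=-\log(1-\varepsilon)-\varepsilon$. This is strictly positive because the Taylor expansion gives $-\log(1-\varepsilon)=\sum_{k\ge 1}\varepsilon^{k}/k>\varepsilon$. A quick check (second derivative, or concavity of the objective) confirms that $\theta^{*}$ is indeed a maximum.

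Since $\phi(\varepsilon)>0$ is a fixed positive constant and $h_X\to\phi(\varepsilon)$ as $n\to+\infty$, for all sufficiently large $n$ (depending only on $\varepsilon$) we have $h_X\ge\phi/2$, and therefore $\mathbb{P}(|V(T)|\ge k)\le\exp(-k\phi/2)$ for every $k\ge 1$; in particular any choice of $k_0=k_0(\varepsilon)$ works, even $k_0=1$. I do not anticipate any real obstacle: the whole argument is a combination of the preceding computation, the cited lemma, and a one-line optimisation of the functional defining $\phi$ in \eqref{eq phi}.
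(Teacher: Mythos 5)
Your proposal is correct and follows essentially the same route as the paper: the corollary is exactly Lemma~\ref{size lem} combined with the computation of $h_X$ carried out immediately before the statement, and your explicit evaluation $\phi(\varepsilon)=-\log(1-\varepsilon)-\varepsilon>0$ of the supremum in \eqref{eq phi} is a small, correct addition that the paper leaves implicit. The only remark is that, just like the paper's own derivation (and its later application, which is phrased ``for every $k\ge 1$ and for every $n$ large enough''), your second bound is obtained for all sufficiently large $n$ rather than by restricting $k$, which matches how the corollary is actually used.
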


\subsection{Combinatorial preliminaries}

\textbf{The isoperimetric constant of a product graph:} Recall that $G$ is a graph, defined as a Cartesian product of the graphs $G_1, G_2, \dots, G_n$. The next result, due to Chung and Tetali~\cite{CT}, makes a connection between the isoperimetric constant of $G$ and the isoperimetric constants of $(G_k)_{k\in [n]}$, see also Tillich~\cite{Til} for a slight improvement.

\begin{theorem}[\cite{CT}, Theorem 2]\label{isoper thm}
\begin{equation*}
    \frac{1}{2} \min_{k\in [n]} i(G_k)\le i(G)\le \min_{k\in [n]} i(G_k).
\end{equation*}
\end{theorem}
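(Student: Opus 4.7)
My plan is to prove the two inequalities of Theorem~\ref{isoper thm} separately: the upper bound admits a one-line product construction, while the lower bound requires a fibre-wise application of the hypothesis combined with a combinatorial estimate whose proof is, in my view, the only genuinely subtle step.

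\textbf{Upper bound.} I would pick $k^\star\in[n]$ realising $\min_k i(G_k)$, and let $T^\star\subseteq V(G_{k^\star})$, with $|T^\star|\le |V(G_{k^\star})|/2$, be an extremal set for $G_{k^\star}$. Lifting $T^\star$ into $G$ via
\[S^\star=\Big(\prod_{j<k^\star} V(G_j)\Big)\times T^\star\times\Big(\prod_{j>k^\star} V(G_j)\Big),\]
the bound $|S^\star|\le |V(G)|/2$ is immediate. Since an edge of $G$ leaves $S^\star$ if and only if it is in direction $k^\star$ and its $G_{k^\star}$-endpoints straddle $T^\star$, a direct count gives $|\partial_G S^\star|=|\partial_{G_{k^\star}}T^\star|\cdot\prod_{j\ne k^\star}|V(G_j)|$, hence $|\partial_G S^\star|/|S^\star|=i(G_{k^\star})=\min_k i(G_k)$, as required.

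\textbf{Lower bound: setup.} Fix $S\subseteq V(G)$ with $1\le |S|\le |V(G)|/2$ and write $i^\star=\min_k i(G_k)$. For each $k\in[n]$ and each fibre $\tau$ in direction $k$ (obtained by freezing all coordinates except the $k$-th), set $S_\tau=S\cap \tau$, and apply the isoperimetric inequality for $G_k$ to whichever of $S_\tau$, $\tau\setminus S_\tau$ has cardinality at most $|\tau|/2$. This yields $|\partial_\tau S|\ge i^\star\cdot \min(|S_\tau|,\,|\tau|-|S_\tau|)$. Since each edge of $G$ lies in a unique fibre (in a unique direction), summing over all fibres and all directions gives
\[|\partial_G S|\ge i^\star\sum_{k\in[n]}\sum_\tau \min(|S_\tau|,\,|\tau|-|S_\tau|).\]

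\textbf{The main obstacle.} The remaining, and to my mind the only truly delicate, step is to show
\[\sum_{k\in[n]}\sum_\tau \min(|S_\tau|,\,|\tau|-|S_\tau|)\ge \frac{|S|}{2},\]
which in combination with the previous display would finish the proof. My intended route is a double-counting argument: declare a pair $(v,k)$ a \emph{minority pair} if $v$ lies on the smaller side of its fibre $\tau_k(v)$; then the left-hand side above counts exactly the minority pairs in $V(G)\times[n]$. The constraint $|S|\le |V(G)|/2$ must be exploited to force sufficiently many minority pairs to have $v\in S$, since $S$ cannot be the majority of every fibre along every direction simultaneously while remaining at most half of $V(G)$. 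The factor $\tfrac12$ is natural: a vertex of $S$ may well be in the majority of some of its $n$ fibres and in the minority of others, and the argument has to guarantee that globally at least $|S|/2$ minority incidences involve vertices of $S$. Once this combinatorial inequality is in hand one obtains $|\partial_G S|\ge (i^\star/2)\,|S|$ for all admissible $S$, proving $i(G)\ge i^\star/2$.
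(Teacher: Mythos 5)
Your upper bound is complete and correct, and your reduction for the lower bound (decomposing $\partial_G S$ over the fibres in each direction and applying $|\partial_{G_k}S_\tau|\ge i^\star\min(|S_\tau|,|\tau|-|S_\tau|)$ fibrewise) is exactly the natural route; note, though, that the paper itself gives no proof here — it quotes the statement from Chung and Tetali — so the relevant comparison is with their argument, which likewise reduces to the counting inequality $\sum_k\sum_\tau\min(|S_\tau|,|\tau|-|S_\tau|)\ge|S|/2$ that you have isolated. That inequality is true, but you have not proved it, and it really is the whole content of the factor $\tfrac12$; leaving it as an ``intended route'' is a genuine gap, not a routine verification.

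Worse, the specific route you sketch cannot work as stated. You propose to show that at least $|S|/2$ minority incidences $(v,k)$ have $v\in S$, i.e.\ that many vertices of $S$ lie on the smaller side of one of their fibres. This is false: take $n=2$, $V_1=V_2=[10]$, $A=\{1,\dots,7\}$ and $S=A\times A$, so $|S|=49\le|V|/2$, yet every fibre through a vertex of $S$ meets $S$ in $7$ of its $10$ vertices, so every $v\in S$ is on the larger side of both of its fibres and the number of minority incidences with $v\in S$ is zero (your heuristic ``$S$ cannot be the majority of every fibre along every direction'' fails for exactly this kind of product-shaped $S$). The total min-sum is nevertheless $42\ge 24.5$, because the minority sides of those majority-$S$ fibres consist of vertices \emph{outside} $S$; any correct proof must credit these complementary incidences as well. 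One workable argument: induct on the number of factors, writing $V=V_1\times W$; the fibres inside each slice $\{v\}\times W$ contribute at least $\tfrac12\min(|S_v|,|W|-|S_v|)$ by induction (the min-sum is invariant under complementing $S_v$), the deficit coming from slices with $|S_v|>|W|/2$ is $\sum_{v:|S_v|>|W|/2}\bigl(|S_v|-|W|/2\bigr)$, and the direction-$1$ fibres cover this deficit using $h_w\le\min(s_w,|A|)$ together with the global constraint $\sum_w s_w\le |V_1||W|/2$, where $s_w=|S\cap(V_1\times\{w\})|$, $A$ is the set of heavy slices and $h_w=|\{v\in A:(v,w)\in S\}|$. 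Until some such argument is supplied, the lower bound is not established.
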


We directly deduce the following corollary.

\begin{corollary}\label{isoper cor}
Under condition~\ref{cn 2} on $(G_k)_{k\in [n]}$ we have $n^{-\gamma}/2\le i(G)$.
\end{corollary}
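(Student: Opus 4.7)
The plan is to apply the lower bound half of Theorem~\ref{isoper thm} directly. By that theorem,
\begin{equation*}
    i(G) \ge \frac{1}{2}\min_{k\in [n]} i(G_k).
\end{equation*}
Condition~\ref{cn 2} asserts that $i(G_{n,k}) \ge n^{-\gamma}$ for every $k\in [n]$, so in our abbreviated notation $i(G_k)\ge n^{-\gamma}$ for all $k$. Taking the minimum over $k\in [n]$ therefore yields $\min_{k\in [n]} i(G_k)\ge n^{-\gamma}$, and combining with the displayed inequality gives $i(G)\ge n^{-\gamma}/2$, which is exactly the desired conclusion.

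There is no real obstacle: the corollary is a one-line substitution into an already-quoted theorem. The only thing to be careful about is the notational overloading introduced just before Theorem~\ref{main thm} (writing $G_j$ for $G_{n,j}$ and $G$ for $G_{[n]} = \square_{j\in [n]} G_{n,j}$), so that the hypothesis $i(G_{n,j})\ge n^{-\gamma}$ is legitimately applied to the factors appearing in the Cartesian product whose isoperimetric constant Theorem~\ref{isoper thm} controls.
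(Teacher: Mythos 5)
Your proof is correct and is exactly the argument the paper intends, since the corollary is stated as a direct deduction from Theorem~\ref{isoper thm}: the lower bound $i(G)\ge \tfrac12\min_{k\in[n]} i(G_k)$ combined with condition~\ref{cn 2} gives $i(G)\ge n^{-\gamma}/2$. Nothing is missing.
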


\vspace{1em}

\noindent
\textbf{The largest connected component and ``balanced'' empty cuts:} The following easy observation makes a connection between empty cuts in a graph and the size of the largest connected component.

\begin{observation}\label{ob discrete cont}
Fix $k\in \mathbb N$. Let $H$ be a graph with $h$ vertices and let $\mathcal C_1, \dots, \mathcal C_k$ be disjoint connected subgraphs of $H$ such that $\cup_{j\in [k]} V(\mathcal C_j) = V(H)$. Suppose that for any set $J\subseteq [k]$ such that $|\cup_{j\in J} V(\mathcal C_j)|\in [h/3, 2h/3]$, there exists an edge in $H$ between a vertex in $\cup_{j\in J} \mathcal C_j$ and a vertex in $\cup_{j\in [k]\setminus J} \mathcal C_j$. Then, there is a connected component of $H$ that contains more than $h/3$ vertices.
\end{observation}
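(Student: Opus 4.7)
The plan is to argue by contradiction: assume every connected component of $H$ has at most $h/3$ vertices, and then exhibit a subset $J\subseteq [k]$ for which $|\bigcup_{j\in J} V(\mathcal C_j)|\in [h/3, 2h/3]$ and such that the corresponding vertex set is a union of full connected components of $H$. Such a $J$ yields an empty cut in $H$, which immediately contradicts the hypothesis of the observation.

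To produce such a $J$, I would first collapse the problem to the level of components. Define an equivalence relation $\sim$ on $[k]$ by $i\sim j$ iff $\mathcal C_i$ and $\mathcal C_j$ lie in the same connected component of $H$; let $S_1, \dots, S_m$ be the equivalence classes and set $n_\ell = |\bigcup_{j\in S_\ell} V(\mathcal C_j)|$ for each $\ell\in [m]$. Since the $\mathcal C_j$'s are connected and cover $V(H)$, the sets $\bigcup_{j\in S_\ell} V(\mathcal C_j)$ are precisely the connected components of $H$, so $n_1+\dots+n_m = h$ and the contrapositive assumption gives $n_\ell\le h/3$ for every $\ell$.

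The remaining task is a discrete intermediate-value argument. Consider the partial sums $\sigma_0 = 0$ and $\sigma_i = n_1+\dots+n_i$ for $i\in [m]$, so that $\sigma_m = h$ and $\sigma_i - \sigma_{i-1} = n_i\le h/3$. Since $\sigma_0 < h/3$, $\sigma_m > 2h/3$, and consecutive terms differ by at most $h/3$, the sequence cannot jump over the interval $[h/3, 2h/3]$; hence there exists an index $i^*\in [m]$ with $\sigma_{i^*}\in [h/3, 2h/3]$. Taking $J = S_1\cup\dots\cup S_{i^*}$ gives a set for which $|\bigcup_{j\in J} V(\mathcal C_j)| = \sigma_{i^*}\in [h/3, 2h/3]$ while, by construction, this vertex set is a union of connected components of $H$ and therefore has no edges to its complement. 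This contradicts the assumption of the observation and completes the proof.

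The only potentially delicate point is the discrete intermediate value step, but it is entirely elementary; no structural property of $H$ beyond the component sizes being bounded by $h/3$ is needed to run it.
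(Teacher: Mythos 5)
Your proof is correct and takes essentially the same route as the paper's: argue by contradiction assuming every component has order at most $h/3$, accumulate components one at a time (you via partial sums $\sigma_i$, the paper via nested graphs $H_i$), invoke discrete continuity to land in $[h/3, 2h/3]$, and note that the resulting set of cells is a union of full components, giving an empty cut. The extra step you add---spelling out via the equivalence relation $\sim$ that each component is exactly a union of cells---is implicit but unstated in the paper and is a reasonable thing to make explicit.
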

\begin{proof}
We argue by contradiction. Suppose that $H$ contains $m$ connected components and each of them has order at most $h/3$. Then, consider the graphs $H_0 = \varnothing, H_1, H_2, \dots, H_m$, where for every $i\in [m]$ we define $H_{i}$ to be the union of $H_{i-1}$ and some connected component in the graph $H\setminus H_{i-1}$. Since $|V(H_m)| = h$ and for every $i\in [m]$, $|V(H_i)| - |V(H_{i-1})|\le h/3$, by discrete continuity there is $\ell\in [m]$ such that $|V(H_{\ell})|\in [h/3, 2h/3]$. This is a contradiction with the assumption in the statement for the family $\{\mathcal C_j: \mathcal C_j\cap H_{\ell} = \emptyset\}$. The observation is proved.
\end{proof}

\section{\texorpdfstring{Proof of Theorem~\ref{main thm}~a) -- the subcritical regime}{}}\label{sec pt 1 proof}

We begin with a proof of Theorem~\ref{main thm}~a). Our first step will be to estimate the number of vertices of $G$ of degree at least $(1+\varepsilon/2)\bard$. For every $i\in [n]$, let $X_i$ be the degree of a uniformly chosen vertex in $G_i$. For every $i\in [n]$, define $S_i = X_1+X_2+\dots+X_i$. Since $\mathbb E[S_n] = \bard$ by \eqref{eq 1} and for every $i\in [n]$ we have $\Delta(G_i)\le C$, we conclude by the bounded difference inequality (Theorem~\ref{azuma}) that
\begin{equation}\label{eq 2}
    \mathbb P(|S_n - \bard|\ge \varepsilon \bard/2)\le 2\exp\left(-\dfrac{\varepsilon^2 \bard^2/4}{2C^2 n}\right)\le 2\exp\left(-\dfrac{\varepsilon^2 n}{8C^2}\right).
\end{equation}
The second inequality comes from the fact that $\bard\ge n$: indeed, by \eqref{eq 1} we have $\bard = \sum_{i=1}^n \bard(G_i)$, and every graph among $(G_i)_{i\in [n]}$ is connected and therefore its average degree is at least 1. Therefore, the number of vertices in $G$ with degree more that $(1+\varepsilon/2)\bard$ is at most a $2\exp\left(-\frac{\varepsilon^2 n}{8C^2}\right)$-proportion of all vertices of $G$.

\begin{proof}[Proof of Theorem~\ref{main thm}~a)]
First, we prove that the number of vertices connected via a path in $G_p$ to a vertex of degree at least $(1+\varepsilon/2) \bard$ in $G$ is at most $\exp\left(-\dfrac{\varepsilon^2 n}{9 C^2}\right) |V(G)|$ a.a.s. Indeed, let $U$ be the set of vertices of degree at least $(1+\varepsilon/2) \bard$ in $G$. Then, by \eqref{eq 2} we have $|U|\le \exp\left(-\dfrac{\varepsilon^2 n}{8 C^2}\right) |V(G)|$. We consider the following stochastic process. Let $U_0 = N_{G_p}(U)$, and for every positive integer $k$ we inductively define $U_k = N_{G_p}(U_{k-1})\setminus (U\cup U_0\cup \dots \cup U_{k-1})$. Since for every set $V\subseteq V(G)\setminus U$ we have $|\partial V|\le (1+\varepsilon/2)\bard |V|$, we have that, for every $k\ge 1$, 
\begin{equation*}
    \mathbb E[|U_k|\hspace{0.2em}|\hspace{0.2em}U_{k-1}]\le p|\partial U_{k-1}|\le p(1+\varepsilon/2)\bard |U_{k-1}|\le (1-\varepsilon/2)|U_{k-1}|. 
\end{equation*}

We conclude that for every $k\ge 1$, 
\begin{equation*}
\mathbb E[|U_k|]\le (1-\varepsilon/2)^k \mathbb E[|U_0|]\le (1-\varepsilon/2)^k Cn |U|.
\end{equation*}
Thus, we get by Markov's inequality that
\begin{equation*}
\mathbb P(\exists k\ge 1, |U_k|\ge (1-\varepsilon/2)^{k/2} Cn^2 |U|)\le \sum_{k\ge 1} \mathbb P(|U_k|\ge (1-\varepsilon/2)^{k/2} Cn^2 |U|)\le \sum_{k\ge 1} \dfrac{(1-\varepsilon/2)^{k/2}}{n} = o(1).
\end{equation*}
We deduce that the union of all connected components of $G_p$, containing at least one vertex of $U$, contains at most $\sum_{k\ge 1} (1-\varepsilon)^{k/2} Cn^2 |U| = \Theta(n^2 |U|) = o(|V(G)|)$ vertices a.a.s.

Denote the set of vertices in all explored connected components by $\overline{U}$. Here, an edge of $G$ is explored if the fact that it is open or not has been revealed, and a connected component is explored if all edges it contains have been explored and are open, while all edges on its boundary have been explored and are closed. After exploring all connected components of $G_p$, containing at least one vertex in $U$, we are left with unexplored edges, incident only to vertices of degree less than $(1+\varepsilon/2) \bard$ in $G$. We prove that in the remainder of $G_p$ there is a.a.s. no connected component of order more than $\lceil 4\log |V(G)|/\phi(\varepsilon/2)\rceil$, with $\phi$ defined in \eqref{eq phi}. Indeed, choose any vertex $v$ and start an exploration process of its connected component $CC_p(v)$ in $G_p$. Note that any vertex in $V(G)\setminus \overline{U}$ is incident to less than $(1+\varepsilon/2) \bard$ unexplored edges. Thus, the number of edges in $CC_p(v)$ is stochastically dominated by the number of explored edges in a BGW tree $T$ with progeny distribution $\Bin(\lfloor(1+\varepsilon/2)\bard\rfloor, p)$. Since $\bard\to +\infty$ with $n$ and 
\begin{equation*}
p = \dfrac{1-\varepsilon}{\bard}\le \dfrac{1-\varepsilon/2}{(1+\varepsilon/2)\bard}\le \dfrac{1-\varepsilon/2}{\lfloor(1+\varepsilon/2)\bard\rfloor},
\end{equation*}
by Corollary~\ref{size cor} we get that for every $k\ge 1$ and for every $n$ large enough
\begin{equation*}
\mathbb P(|V(CC_p(v))|\ge k)\le \mathbb P(|V(T)|\ge k)\le \exp\left(-\dfrac{k\phi(\varepsilon/2)}{2}\right).
\end{equation*}
Choosing $k = k_0 := \lceil 4\log |V(G)|/\phi(\varepsilon/2)\rceil$, we get that with probability at most $1/|V(G)|^2$, $CC_p(v)$ contains at least $k_0$ vertices. A union bound over all vertices in $V(G)\setminus \overline{U}$ implies that with probability at most $1/|V(G)|$, the largest component in $G_p$, containing no vertex in $U$, is of order at least $k_0$. Since
\begin{equation*}
    \exp\left(-\dfrac{\varepsilon^2 n}{9 C^2}\right) |V(G)|\ge 2^{-n/2} |V(G)|\ge \sqrt{|V(G)|}\gg \log |V(G)|,
\end{equation*}
the proof is finished.
\end{proof}

\section{\texorpdfstring{Proof of Theorem~\ref{main thm}~b) -- the supercritical regime}{}}\label{sec pt 2 proof}

The remainder of the paper will be directed towards proving Theorem~\ref{main thm}~b). The main technique, well-known under the name \emph{two-round exposure} or \emph{sprinkling}, has by now become a classical tool in the field of random graphs. It states that the graph $G_p$ may be realised as a union of two random graphs on the same vertex set $G_{p_1}$ and $G_{p_2}$, sampled independently from each other, where $(1-p_1)(1-p_2) = 1-p$. Indeed, the probability that an edge in $G$ does not appear in $G_p$ is $1-p$, while by independence the probability that an edge in $G$ does not appear in $G_{p_1}\cup G_{p_2}$ is $(1-p_1)(1-p_2)$. Moreover, in both $G_p$ and $G_{p_1}\cup G_{p_2}$, different edges appear independently from each other.

In our case, inspired by \cite{AKS}, we show that one may choose $p_1$ and $p_2$ appropriately so that a.a.s. $G_{p_1}$ consists of a number of connected components of order at least $\Omega(n^k)$ for some large enough positive integer $k$, which contain a constant proportion of all vertices of $G$. Then, at the second stage, we show that a.a.s. a constant proportion of all such component merge in a connected component of size $\Theta(|V(G)|)$.

In the sequel, $\deg(v)$ will refer to the degree of a vertex $v$ in $G$.

\begin{observation}\label{ob neighbours}
For some $i\in [n]$, let 
\begin{equation*}
v_1 = (w_1, \dots, w_{i-1}, u_1, w_{i+1}, \dots, w_n)\text{ and }v_2 = (w_1, \dots, w_{i-1}, u_2, w_{i+1}, \dots, w_n)
\end{equation*}
be two vertices in $G$. Then, $|\deg(v_1) - \deg(v_2)|\le C-1$.
\end{observation}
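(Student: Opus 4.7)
The plan is almost immediate from the definition of the Cartesian product, so the "proof" is really just an unpacking of how degrees decompose.

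First I would recall that, as already noted just before equation~\eqref{eq 1}, the degree of a vertex $v = (v_1,\dots,v_n) \in V(G)$ splits as a sum over coordinates,
\[
\deg_G(v) \;=\; \sum_{j=1}^n \deg_{G_j}(v_j),
\]
because a neighbour of $v$ in $G$ must agree with $v$ in all but one coordinate $j$, and in that coordinate it is a neighbour of $v_j$ in $G_j$. Applying this identity to $v_1$ and $v_2$, all summands indexed by $j \neq i$ are identical, so the sums telescope to
\[
\deg(v_1) - \deg(v_2) \;=\; \deg_{G_i}(u_1) - \deg_{G_i}(u_2).
\]

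Next I would bound the right-hand side using the two running assumptions on the factors. By condition~\ref{cn 1}, both $\deg_{G_i}(u_1)$ and $\deg_{G_i}(u_2)$ are at most $C$. Since each $G_{n,j}$ is connected and contains at least one edge, it has at least two vertices, so every vertex of $G_i$ has degree at least $1$. Hence both degrees lie in $\{1,2,\dots,C\}$, which forces $|\deg_{G_i}(u_1) - \deg_{G_i}(u_2)| \le C - 1$, and combined with the previous display this yields the claim.

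There is no genuine obstacle here: the statement is a one-line structural fact, recording that changing a single coordinate of a vertex perturbs its $G$-degree by at most $C-1$. I would expect this observation to be invoked later as the bounded-difference constant for a martingale or sprinkling argument (in the spirit of the application of Theorem~\ref{azuma} used in Section~\ref{sec pt 1 proof}), which is presumably why it is isolated as a standalone observation rather than absorbed into a longer computation.
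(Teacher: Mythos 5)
Your proof is correct and follows exactly the paper's argument: the coordinate-wise degree decomposition reduces the difference to $\deg_{G_i}(u_1)-\deg_{G_i}(u_2)$, which is bounded by $C-1$ because $G_i$ is connected (hence has minimum degree at least $1$) and has maximum degree at most $C$. Your write-up just spells out the last step slightly more explicitly than the paper does.
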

\begin{proof}
We have $\deg(v_1) - \deg(v_2) = \deg_{G_i}(u_1) - \deg_{G_i}(u_2)$. The claim follows since the graph $G_i$ is connected and has maximum degree at most $C$.
\end{proof}

\begin{corollary}\label{cor neighbours}
If $C\ge 2$, two vertices $v_1$ and $v_2$ in $G$ are at graph distance at least $\dfrac{|\deg(v_1) - \deg(v_2)|}{C-1}$.
\end{corollary}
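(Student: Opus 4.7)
The plan is to prove the contrapositive bound in its equivalent form: if $v_1$ and $v_2$ are at graph distance exactly $d$ in $G$, then $|\deg(v_1) - \deg(v_2)| \le d(C-1)$. Dividing by $C-1$ (which is positive since $C \ge 2$) then yields the desired inequality.

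To set this up, I would fix a shortest path $v_1 = u_0, u_1, u_2, \dots, u_d = v_2$ from $v_1$ to $v_2$ in $G$. By the definition of the edge set of the Cartesian product $G = \square_{j\in[n]} G_j$, any two consecutive vertices $u_{k-1}$ and $u_k$ on this path agree in all coordinates except one, so they fall exactly into the hypothesis of Observation~\ref{ob neighbours}. Hence $|\deg(u_{k-1}) - \deg(u_k)| \le C - 1$ for every $k \in [d]$.

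The conclusion then follows by a one-line telescoping argument using the triangle inequality for absolute values:
\begin{equation*}
|\deg(v_1) - \deg(v_2)| \;=\; \Bigl|\sum_{k=1}^{d} \bigl(\deg(u_{k-1}) - \deg(u_k)\bigr)\Bigr| \;\le\; \sum_{k=1}^{d} |\deg(u_{k-1}) - \deg(u_k)| \;\le\; d(C-1).
\end{equation*}
Rearranging gives $d \ge |\deg(v_1) - \deg(v_2)|/(C-1)$, as required.

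There is essentially no obstacle here; this is a short corollary whose entire substance is contained in Observation~\ref{ob neighbours}. The only minor points to keep straight are that one should invoke a shortest path (so that $d$ really is the graph distance rather than an upper bound on it), and that the hypothesis $C \ge 2$ is needed only to make the division by $C-1$ meaningful — in the degenerate case $C = 1$ each $G_j$ is a single edge and the inequality would become vacuous or undefined.
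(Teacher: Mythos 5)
Your proposal is correct and matches the paper's intended argument: the corollary is stated as an immediate consequence of Observation~\ref{ob neighbours}, obtained exactly as you do, by telescoping the degree differences along the edges of a shortest path in $G$. Nothing further is needed.
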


Let $D$ be the set of vertices of degree at most $(1-\varepsilon/2)\bard$ in $G$.

\begin{lemma}\label{AKS lem 1}
Fix $\varepsilon\in (0, 0.1)$ and $p\ge  (1+7\varepsilon/8)/\bard$. There is a constant $c_1 = c_1(\varepsilon) > 0$ such that, for every large enough $n$, every vertex in $G$ of degree at least $(1-\varepsilon/4) \bard$ participates in a connected component of $G_p$ of size at least $\varepsilon \bard/4C$ with probability at least $c_1$.
\end{lemma}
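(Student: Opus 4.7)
The plan is to start a breadth-first exploration of the component of $v$ in $G_p$, cap it at $k_0 := \lceil \varepsilon \bard/(4C)\rceil$ discovered vertices, and show that the exploration stochastically dominates a supercritical BGW tree whose survival probability is bounded away from $0$ uniformly in $n$.

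Concretely, I would maintain an active queue (initially $\{v\}$) and an explored set; at each step, remove the oldest queued vertex $w$, inspect the status of all edges between $w$ and vertices not yet active or explored, enqueue the endpoints of the open ones, and move $w$ to the explored set. Every vertex visited lies at $G$-distance at most $k_0-1$ from $v$, so by iterating Observation~\ref{ob neighbours} (cf.\ Corollary~\ref{cor neighbours}),
\[
\deg_G(w) \ge \deg_G(v) - (k_0-1)(C-1) \ge (1-\varepsilon/4)\bard - \tfrac{\varepsilon(C-1)}{4C}\bard - C \ge (1-\varepsilon/2-o(1))\bard
\]
as $n\to\infty$. Of these neighbours, at most $k_0 \le \varepsilon \bard/(4C) \le \varepsilon \bard/8$ (using $C\ge 2$; the case $C=1$ forces every $G_j$ to be $K_2$, so $G$ is the hypercube and the argument simplifies by homogeneity) may already sit in the active or explored set, so at least $(1-5\varepsilon/8-o(1))\bard$ of them are incident to unrevealed edges. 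Conditionally on the history, these edges are independently open with probability $p$, hence the number of new children of $w$ stochastically dominates $\mathrm{Bin}(m,p)$ with $m=\lceil (1-5\varepsilon/8-o(1))\bard\rceil$.

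The mean $mp$ equals $(1-5\varepsilon/8)(1+7\varepsilon/8)+o(1)=1+\varepsilon/4-35\varepsilon^2/64+o(1)$, which exceeds $1$ by some $\delta=\delta(\varepsilon)>0$ for every $\varepsilon\in(0,0.1)$. Thus the entire exploration (until it dies or reaches size $k_0$) dominates a BGW tree $T$ with $\mathrm{Bin}(m,p)$ progeny distribution. As $n\to\infty$, $\mathrm{Bin}(m,p)$ converges in distribution to $\mathrm{Poisson}(1+\delta')$ for some $\delta'>0$; since the extinction probability (the smallest fixed point of the offspring generating function on $[0,1]$) is a continuous functional of this generating function, the survival probability of $T$ converges to a positive constant $\rho(\varepsilon)>0$. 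For $n$ large, $T$ then survives with probability at least $c_1 := \rho(\varepsilon)/2$. Because $k_0\to\infty$, survival of $T$ forces $|T|\ge k_0$, so $v$ lies in a $G_p$-component of size at least $\varepsilon\bard/(4C)$ with probability at least $c_1$.

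The main obstacle is making the BGW domination airtight: one has to set up BFS so that the edges revealed at each step are genuinely fresh (hence conditionally independent of the past), and so that the lower bound on the unrevealed-neighbour count of the current vertex holds uniformly throughout the exploration — which is precisely what the cutoff at $k_0$ vertices ensures. Once this is in place, the degree control from Observation~\ref{ob neighbours}, the supercriticality computation, and the continuity of extinction probabilities are all standard.
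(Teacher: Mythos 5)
Your proposal is correct and follows essentially the same route as the paper: a capped exploration of the component of $v$, degree control of explored vertices via Observation~\ref{ob neighbours}/Corollary~\ref{cor neighbours}, subtraction of the at most $\lceil\varepsilon\bard/4C\rceil$ already-discovered vertices, and stochastic domination by a supercritical $\Bin(m,p)$ BGW tree whose survival probability is bounded below by a constant depending only on $\varepsilon$. The only differences are cosmetic (slightly different intermediate constants, and your Poisson-limit/continuity justification that $c_1$ depends only on $\varepsilon$, which the paper simply asserts).
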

\begin{proof}
Fix a vertex $v_0$ satisfying $\deg_G(v_0)\ge (1-\varepsilon/4) \bard$ and start an exploration process of the connected component of $v_0$ in $G_p$ as follows. We divide the vertices of $G_p$ in several categories: \emph{active}, when the edges, incident to this vertex, have not been explored from the vertex itself, but it has been attained via a path from $v_0$, \emph{passive}, if the vertex was active before but the edges in its neighbourhood have been explored from it, and \emph{processed}, when the vertex is either active or passive. For example, in the beginning only the vertex $v_0$ is active and there are no passive vertices. A reformulation of the statement of the lemma is that, by starting an exploration process of $G_p$ from $v_0$, with probability at least $c_1$ at least $\varepsilon \bard/4C$ vertices will be processed in the end. Start by exploring all edges in $G$, going out of $v_0$, and make all neighbours of $v_0$ in $G_p$ active. Then, make $v_0$ passive and find an active vertex $v_1$, if it exists. Then, explore all edges incident to $v_1$ in $G$ and make all neighbours of $v_1$ in $G_p$ that have not yet been processed active. Then, make $v_1$ passive and find an active vertex $v_2$, if it exists, etc. Continue with the exploration until either all or at least $\varepsilon \bard/4C$ vertices in the connected component of $v_0$ in $G_p$ have been processed.\par

Fix an integer $n\ge 8C^2/\varepsilon$. If $C\ge 2$, by Corollary~\ref{cor neighbours} every vertex of degree at least $(1-\varepsilon/4) \bard$ in $G$ is at distance at least $\varepsilon \bard/4(C-1)\ge \varepsilon \bard/4C + 1$ from $D$. The same holds if $C=1$ since $D = \emptyset$ then. Fix any integer $k\in [2,\varepsilon \bard/4C]$ (this interval is non-empty since $\bard\ge n\ge 8C/\varepsilon$). Under the assumption that at most $k$ vertices in $G_p$ have been made passive before exploring the neighbourhood of a particular active vertex $u$, at least 
\begin{equation*}
\deg_G(u) - 1 - C - (C-1) (k-1)\ge \deg_G(u) - Ck\ge (1-\varepsilon/2) \bard - \varepsilon \bard/4\ge (1 - 3\varepsilon/4) \bard  
\end{equation*}
neighbours of $u$ have never been processed before (here, $1+C+(C-1)(k-1)\le Ck$ is an upper bound of the total number of processed vertices after $k$ steps, and $\deg_G(u)\ge (1-\varepsilon/2) \bard$ since $u$ is at distance at most $k\le \varepsilon \bard/4C$ from $v_0$ that satisfies $\deg_G(v_0)\ge (1-\varepsilon/4) \bard$). Therefore, until the number of processed vertices is at most $\varepsilon \bard/4C$, the number of edges of $G$, incident to the currently explored vertex $u$ and leading to vertices which have never been processed before, is at least $(1 - 3\varepsilon/4) \bard$. We may conclude that the exploration of the connected component of $v_0$ in $G_p$, up to the moment of finding $\lceil \varepsilon \bard/4C\rceil$ processed vertices, stochastically dominates the exploration of a BGW tree with progeny distribution $\Bin\left(\lceil(1-3\varepsilon/4)\bard\rceil, \frac{1+7\varepsilon/8}{\bard}\right)$. For every $\varepsilon\le 0.1$, the BGW tree with these parameters is supercritical since
\begin{equation*}
    \lceil(1-3\varepsilon/4)\bard\rceil\cdot \dfrac{1+7\varepsilon/8}{\bard}\ge 1 + \dfrac{\varepsilon}{8} - \dfrac{21\varepsilon^2}{32} > 1,
\end{equation*}
and therefore it has probability $c_1 = c_1(\varepsilon) > 0$ to grow to infinity. Thus, with probability at least $c_1$, the exploration of $G_p$ from $v_0$ leads to at least $\varepsilon \bard/4C$ processed vertices, which proves the lemma.
\end{proof}

Following~\cite{AKS}, we call a connected subgraph of $G_p$ a \emph{cell}. Note that a connected component of $G_p$ is a cell, but a cell does not have to be a connected component of $G_p$ itself. Fix the constant $c_1 = c_1(\varepsilon)$ given by Lemma~\ref{AKS lem 1}. We say that a vertex $v$ is a neighbour of a set of vertices $A$ in a graph $H$ if there is a vertex $u\in A$, which is a neighbour of $v$ in $H$. Let $P_p$ be the following property of a vertex $v$ of $G$:\\

\begin{center}
the vertex $v$ is a neighbour in $G$ to at least $c_1 \varepsilon n/64 C$ disjoint cells in $G_p$, each of order at least $\varepsilon n/8C$.
\end{center}

\begin{lemma}\label{AKS lem 2}
Fix $\varepsilon\in (0, 0.1)$ and $p = (1+\varepsilon)/\bard$. There is a constant $c_2 = c_2(\varepsilon) > 0$ such that, for every large enough $n$, every vertex in $G$ of degree at least $(1-\varepsilon/8) \bard$ has property $P_p$ with probability at least $1 - \exp(- c_2 n)$.
\end{lemma}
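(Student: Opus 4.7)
My plan is to run the exploration of Lemma~\ref{AKS lem 1} from each neighbor of $v$ in $G$ in turn, subject to the constraint that the resulting cells be pairwise disjoint, and then conclude via a Chernoff-type concentration applied to the number of successful attempts. Write $N_G(v)=\{u_1,\dots,u_k\}$ so that $k=\deg_G(v)\ge (1-\varepsilon/8)\bard\ge (1-\varepsilon/8)n$, and set $s:=\lceil \varepsilon n/(8C)\rceil$. By Observation~\ref{ob neighbours} applied iteratively, every vertex at graph distance at most $s$ from $v$ has degree in $G$ at least $\deg_G(v)-s(C-1)\ge (1-\varepsilon/4)\bard$ for $n$ large enough (using $\bard\ge n$); this in particular covers each $u_j$ as well as every vertex ever visited by a BFS of diameter at most $s$ starting from some $u_j$.

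I would process $u_1,\dots,u_k$ in order while maintaining a set $U\subseteq V(G)$ of already-consumed vertices (initially empty) and a counter $T$ of successful attempts (initially $0$). At step $j$, skip $u_j$ if $u_j\in U$; otherwise, perform a BFS from $u_j$ in $G_p$ restricted to $V(G)\setminus U$, declaring step $j$ \emph{successful} if the BFS reaches size $s$ (in which case these $s$ vertices are added to $U$, a cell is recorded and $T$ is incremented), and \emph{failed} otherwise (the explored set, of size less than $s$, is added to $U$). The goal is to show that $T\ge T_{\max}:=\lceil c_1\varepsilon n/(64C)\rceil$ at the end with probability at least $1-\exp(-c_2 n)$.

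The key intermediate claim is that whenever $T<T_{\max}$, the conditional probability that step $j$ is successful is at least some absolute constant $c'=c'(\varepsilon,C)>0$. Given this, stochastic domination by an i.i.d.\ Bernoulli$(c')$ sequence together with Chernoff's inequality (Lemma~\ref{chernoff}) applied to $k\ge n/2$ trials yields $T\ge T_{\max}$ with probability $1-\exp(-c_2 n)$. To prove the intermediate claim, I would mimic the BGW-tree domination argument in the proof of Lemma~\ref{AKS lem 1}, run in $G_p\setminus U$ starting from $u_j$: at each BFS step the active vertex $u$ has degree at least $(1-\varepsilon/4)\bard$ in $G$, at most $Cs\le \varepsilon\bard/8$ of its neighbors lie in the portion of the current BFS already processed, and one needs $|N_G(u)\cap U|\le \varepsilon\bard/8$ so that $u$ has at least $(1-3\varepsilon/4)\bard$ fresh neighbors, ensuring the exploration stochastically dominates a supercritical BGW tree with a uniform lower bound on the survival probability.

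The main obstacle is controlling $|N_G(u)\cap U|$: since $|U|$ may grow to order $n^2$ during the process, the trivial bound $|N_G(u)\cap U|\le |U|$ is useless once $|U|$ exceeds $\bard$. The way around this is to leverage the structure of $U$ as a union of connected subgraphs of $G_p$ of size at most $s$, each rooted at a distinct neighbor of $v$, together with the Cartesian product structure of $G$ (notably that past BFSs branch across many coordinate directions and are thus ``spread out'' rather than concentrated near any single vertex), in order to establish $|N_G(u)\cap U|\le \varepsilon\bard/8$ with high conditional probability along the BFS from $u_j$. Turning this heuristic into a clean quantitative estimate is the most delicate step of the argument, and is the essential difference with the proof of Lemma~\ref{AKS lem 1}.
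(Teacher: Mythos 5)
Your plan correctly reduces the lemma to showing that each successive attempt to grow a size-$\lceil\varepsilon n/8C\rceil$ cell from a fresh neighbour of $v$ succeeds with constant conditional probability, but the step on which everything hinges --- the bound $|N_G(u)\cap U|\le \varepsilon\bard/8$ for the vertices $u$ met during a later BFS --- is exactly the part you leave as a heuristic, and it is a genuine gap, not a routine verification. After $\Theta(n)$ successful attempts the consumed set $U$ has size $\Theta(n^2)\gg\bard$, so no counting argument based on $|U|$ alone can work, and the structural claim that earlier cells are ``spread out'' is not automatic: a single connected subgraph of $G_p$ of size $\Theta(n)$ can meet a fixed neighbourhood $N_G(u)$ in $\Theta(n)$ vertices (it can repeatedly step into and out of $N_G(u)$), so a bounded number of earlier cells could in principle swallow a constant fraction of the fresh neighbours of $u$ and destroy the supercriticality of the dominating branching process. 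Ruling this out would require a separate probabilistic argument about where the earlier explorations go, conditionally on the whole past, and nothing in your sketch indicates how to obtain it; in addition, once explorations share the ambient graph $G_p$, the attempts are no longer independent and even the stochastic-domination step needs the conditional estimate uniformly over all histories, which is precisely what is missing.

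The paper sidesteps this difficulty entirely by a different device: writing $v=(v_1,\dots,v_n)$, it picks for each $i\le \lfloor\varepsilon n/16C\rfloor-1$ a neighbour $u_i$ of $v_i$ in $G_i$ and runs the Lemma~\ref{AKS lem 1} exploration for the neighbour $\hat v_i=(v_1,\dots,v_{i-1},u_i,v_{i+1},\dots,v_n)$ \emph{inside} the sub-product $H_i=\bigl(\square_{j\in[i-1]}v_j\bigr)\square\,u_i\,\square\bigl(\square_{k\in[n]\setminus[i]}G_k\bigr)$, i.e.\ with all explored edges confined to $H_i$. Since the $H_i$ are pairwise disjoint subgraphs of $G$, the resulting cells are disjoint by construction and the success events are genuinely independent (they are determined by disjoint edge sets), so a single application of Chernoff's inequality finishes the proof; the only price is checking that $\hat v_i$ still has degree at least $(1-\varepsilon/4)\bard(H_i)$ in $H_i$ and that $p\ge(1+7\varepsilon/8)/\bard(H_i)$, which costs only the passage from $\varepsilon/8$ to $\varepsilon/4$ in the degree hypothesis. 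If you want to salvage your sequential-BFS scheme, the cleanest fix is to import this idea rather than to try to control $|N_G(u)\cap U|$ directly.
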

\begin{proof}
Fix a vertex $v$ satisfying $\deg_G(v)\ge (1-\varepsilon/8) \bard$. Let $v = (v_1, v_2, \dots, v_n)$. For every $i\in [n]$, let $u_i$ be a neighbour of $v_i$ in $G_i$, and define 
\begin{equation*}
    H_i = \left(\underset{j\in [i-1]}{\square} v_j\right)\square\hspace{0.2em} u_i\hspace{0.2em}\square\left(\underset{k\in [n]\setminus [i]}{\square} G_k\right).
\end{equation*}
Thus, the graph $H_i$ is a conveniently chosen projection of $G$, which is isomorphic to $\underset{k\in [n]\setminus [i]}{\square} G_k$. Also, for every $i\in [n]$, let $\hat{v}_i = (v_1, \dots, v_{i-1}, u_i, v_{i+1}, \dots, v_n)$. Then, we claim that, for every $i\le i_{\max} := \lfloor\varepsilon n/16C\rfloor - 1$, the vertex $\hat{v}_i$ has degree at least $(1-\varepsilon/8) \bard - C(i+1)\ge (1-\varepsilon/4) \bard(H_i)$ in $H_i$. Indeed, 
\begin{align*}
    \left(1-\dfrac{\varepsilon}{8}\right)\bard - C(i+1) & \ge\hspace{0.3em} \left(1-\dfrac{\varepsilon}{8}\right)\bard(H_i) - C(i+1)\\ 
    & \ge\hspace{0.3em} \left(1-\dfrac{\varepsilon}{8}\right)\bard(H_i) - \dfrac{\varepsilon n}{16}\\
    & \ge\hspace{0.3em} \left(1-\dfrac{\varepsilon}{4}\right)\bard(H_i) + \dfrac{\varepsilon}{8}\left(\bard(H_i) - \dfrac{n}{2}\right)\\
    & \ge\hspace{0.3em} \left(1-\dfrac{\varepsilon}{4}\right)\bard(H_i) + \dfrac{\varepsilon}{8}\left(n- i - \dfrac{n}{2}\right)\ge \left(1-\dfrac{\varepsilon}{4}\right)\bard(H_i).
\end{align*}

Moreover, by the choice of $i$ we also have
\begin{equation*}
    (1+\varepsilon) \bard(H_i)\ge (1+\varepsilon) (\bard - C(i+1))\ge (1+\varepsilon)\bard - 2C(i+1)\ge \left(1+\dfrac{7 \varepsilon}{8}\right) \bard.
\end{equation*}
Thus, $p = (1+\varepsilon)/\bard\ge (1+7\varepsilon/8)/\bard(H_i)$, and we may apply Lemma~\ref{AKS lem 1} to the vertex $\hat{v}_i$ in $H_i$ and deduce that the probability that $\hat{v}_i$ participates in a cell in $H_i$ of order at least $\varepsilon \bard(H_i)/4C\ge \varepsilon n/8C$ is at least $c_1$. Since the graphs $(H_i)_{i\in [i_{\max}]}$ are disjoint, the events 
\begin{equation*}
    \left(A_i := \{\text{the connected component of } \hat{v}_i \text{ in } H_i \text{ contains at least } \varepsilon n/8C \text{ vertices}\}\right)_{i\in [i_{\max}]}
\end{equation*}
are independent and each of them happens with probability at least $c_1$. Thus, by Chernoff's inequality (Lemma~\ref{chernoff}) for $(\mathds 1_{A_i})_{i\in i_{\max}}$ with $t = \mathbb E[\sum_{i=1}^{i_{\max}} \mathds 1_{A_i}]/2\ge c_1 i_{\max}/2$, the vertex $v$ is incident to at least $c_1 i_{\max}/2\ge c_1 (\varepsilon n/32 C)/2 = c_1 \varepsilon n/64 C$ disjoint cells in its neighbourhood in $G$ with probability at least $1-\exp(-t/8)\ge 1 - \exp(-c_1 \varepsilon n/512 C)$. Thus, $c_2 = c_1 \varepsilon / 512 C$ satisfies our requirements, and the lemma is proved.
\end{proof}

Fix the constant $c_2 = c_2(\varepsilon) > 0$, given by Lemma~\ref{AKS lem 2}.

\begin{corollary}\label{AKM cor 2 bar}
Fix $\varepsilon\in (0,0.1)$ and $p = (1+\varepsilon)/\bard$. Then, every vertex $v\in V(G)$ with $\deg_G(v)\ge (1-\varepsilon/16)\bard$ satisfies the following property with probability at least $1 - \exp(-(c_2+o(1))n)$: $v$ is a neighbour in $G$ to at least $c_1\varepsilon n/64 C$ disjoint cells of $G_p$, each containing at least $\varepsilon n/17 C$ vertices with the property $P_p$.
\end{corollary}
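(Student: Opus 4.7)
My plan is to combine a refined version of the exploration from the proof of Lemma~\ref{AKS lem 2} with a deterministic union bound over a set of candidate vertices of size $\exp(O(\log^2 n))$. Since $(1-\varepsilon/16)\bard \ge (1-\varepsilon/8)\bard$, the whole setup from the proof of Lemma~\ref{AKS lem 2} applied to $v$ is valid: for each $i \le i_{\max} := \lfloor \varepsilon n/16C\rfloor - 1$, one defines a seed $\hat v_i$ and a projection $H_i$ (the $H_i$'s are pairwise vertex-disjoint as subgraphs of $G$), and starts from $\hat v_i$ an exploration in $H_i \cap G_p$ that is dominated by a supercritical Bienaym\'e--Galton--Watson tree of mean $\mu \ge 1+\varepsilon/8+o(1)$. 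I would carry out each exploration in BFS order.

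The main obstacle is a standard depth-size estimate for the dominating supercritical BGW tree: conditionally on survival, the generation-$d$ population grows like $\mu^d$, up to a positive random factor (by Kesten--Stigum convergence, or by a direct second moment computation on generation populations). Using this I would show that for $D = \lceil c\log n/\varepsilon\rceil$ with $c$ a large enough constant, the event $\mathcal A_i$ that the exploration from $\hat v_i$ processes at least $\lceil \varepsilon n/17C\rceil$ vertices within depth $D$ and attains total size at least $\varepsilon n/8C$ has probability at least $c_1/2$. Since the $H_i$'s are edge-disjoint, the events $(\mathcal A_i)_{i\le i_{\max}}$ are independent; by Chernoff's inequality (Lemma~\ref{chernoff}), at least $c_1\varepsilon n/64C$ of them occur with probability $1-\exp(-\Omega(n))$.

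To certify property $P_p$, I introduce the deterministic set
\begin{equation*}
    W = \bigcup_{i \le i_{\max}}\{w \in V(H_i):d_{H_i}(w, \hat v_i) \le D\},
\end{equation*}
and bound $|W| \le i_{\max}(Cn+1)^D = \exp(O(\log^2 n))$ using $\Delta(H_i) \le Cn$. For every $w \in W$, Observation~\ref{ob neighbours} and Corollary~\ref{cor neighbours} applied in $G$ (together with $d_G \le d_{H_i}$) yield $\deg_G(w) \ge \deg_G(v) - (C-1)(D+1) \ge (1-\varepsilon/8)\bard$ for $n$ large enough, since $\bard \ge n$. Hence Lemma~\ref{AKS lem 2} applies to each $w \in W$, and a union bound gives that every vertex of $W$ has property $P_p$ with probability at least $1 - |W|\exp(-c_2 n) = 1 - \exp(-(c_2+o(1))n)$. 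On the intersection of these two high-probability events, each of the $\ge c_1\varepsilon n/64C$ successful cells contains its first $\lceil \varepsilon n/17C\rceil$ BFS-processed vertices inside $W$, and each of those vertices satisfies $P_p$, which is exactly the claim.
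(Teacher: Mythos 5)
Your proposal is correct in substance, but it takes a genuinely different route from the paper's proof. The paper does not re-run any exploration: it conditions on $v$ having property $P_p$ (Lemma~\ref{AKS lem 2}), and inside each of the $\lceil c_1\varepsilon n/64C\rceil$ witnessing cells it labels the $\lceil \varepsilon n/17C\rceil$ vertices closest to $v$ in $G_p$; the resulting distance bound forces every labelled vertex $u$ to satisfy $\deg_G(u)\ge (1-\varepsilon/8)\bard$, so Lemma~\ref{AKS lem 2} applies to $u$ as well, and a union bound over the $\Theta(n^2)$ labelled vertices yields the $1-\exp(-(c_2+o(1))n)$ bound. You instead redo the exploration of Lemmas~\ref{AKS lem 1} and~\ref{AKS lem 2} with an extra logarithmic depth control, so that the vertices which must carry $P_p$ lie in the deterministic set $W$ of size $\exp(O(\log^2 n))$, and you union bound over $W$. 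What your route buys is a cleaner union bound: in the paper the labelled set is percolation-dependent, so the sum over ``$u$ has label $\ell_v$'' is a union bound over a random index set and hides a conditioning subtlety, whereas your $W$ is fixed in advance and the independence needed for Chernoff comes for free from the disjointness of the $H_i$. What it costs is the depth--size estimate, which is where the remaining work sits.

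Three points there need care, all fixable. First, the offspring law of the comparison tree is $\Bin(\lceil(1-3\varepsilon/4)\bard(H_i)\rceil,p)$ and varies with $n$, so quoting Kesten--Stigum for a fixed tree is not quite enough: you need that, conditioned on survival and uniformly in $n$, the tree has at least $\lceil \varepsilon n/17C\rceil$ vertices within depth $D=O_\varepsilon(\log n)$ with probability at least $1/2$ (say); a second-moment bound on generation sizes with constants depending only on $\varepsilon$, or a coupling with a fixed supercritical offspring law obtained by truncation, does this and should be written out. Second, your domination is stated backwards (``dominated by a supercritical BGW tree''): the exploration stochastically dominates the BGW tree, and you must transfer the BGW depth-$D$ count through the \emph{stopped} exploration of Lemma~\ref{AKS lem 1}; the BFS case analysis (either the cap of $\lceil\varepsilon\bard(H_i)/4C\rceil$ processed vertices is reached at depth at most $D$, in which case all processed vertices already lie within depth $D$, or else every vertex of depth at most $D$ is processed before the cap and the coupling applies to them) should be made explicit. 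Third, bookkeeping: with success probability only $c_1/2$ per index, Chernoff gives about $c_1 i_{\max}/4 = c_1\varepsilon n/64C - O(1)$ successful cells, so you should either take the per-index probability slightly larger or absorb the additive constant; and you should check that the $\exp(-\Omega(n))$ Chernoff error has exponent at least $c_2 n$ (it does, since $c_2=c_1\varepsilon/512C$ and your exponent is about $2c_2 n$), so that the total failure probability is indeed $\exp(-(c_2+o(1))n)$.
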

\begin{proof}
Fix any vertex $v$ satisfying $\deg_G(v)\ge (1-\varepsilon/16)\bard$. By Lemma~\ref{AKS lem 2} it has probability at least $1-\exp(-c_2n)$ to have property $P_p$. We condition on this event. Then, for every cell $\mathcal C$ among the first $\lceil c_1\varepsilon n/64 C\rceil$ disjoint neighbouring cells of size at least $\varepsilon n/8 C$, corresponding to $v$, put a label $\ell_v$ on the $\lceil\varepsilon n/17 C\rceil$ vertices of $\mathcal C$ that are closest to $v$ in the graph $G_p$ (if some set of vertices is at the same distance to $v$ in $G_p$, make an arbitrary choice which of them to label, if necessary). Thus, for every vertex $u$ which has received a label $\ell_v$ we have $d_G(u,v)\le d_{G_p}(u,v)\le \lceil\varepsilon n/17 C\rceil$. Moreover, by Corollary~\ref{cor neighbours} for every large enough $n$ we have $|\deg(u) - \deg(v)|\le C\lceil\varepsilon n/17 C\rceil\le \varepsilon n/16$ and so $\deg(u)\ge \deg(v) - \varepsilon n/16\ge \bard - \varepsilon \bard/16 - \varepsilon n/16 \ge (1 - \varepsilon/8) \bard$.

Note that a total of at most $\lceil c_1\varepsilon n/64 C\rceil\cdot (\varepsilon n/16 C) = \Theta(n^2)$ vertices will receive the label $\ell_v$, and furthermore by Lemma~\ref{AKS lem 2} each of these vertices has property $P_p$ with probability at least $1 - \exp(-c_2 n)$. Then, conditionally on the event that $v$ has property $P_p$, any vertex $u$ with label $\ell_v$ has property $P_p$ with probability
\begin{align*}
    & \mathbb P(u \text{ has }P_p\hspace{0.2em}|\hspace{0.2em} v\text{ has } P_p) = \dfrac{\mathbb P(u \text{ and } v \text{ have }P_p)}{\mathbb P(v\text{ has } P_p)}\ge 1 - 2\exp(-c_2 n).
\end{align*}

Thus, the vertex $v$ satisfies the property from the statement of the corollary with probability at least 
\begin{equation*}
    1 - \sum_{u \text{ has label }\ell_v} \mathbb P(u \text{ does not have }P_p\hspace{0.2em}|\hspace{0.2em} v\text{ has } P_p)\ge 1 - \Theta(n^2)\exp(-c_2 n) = 1 - \exp(-(c_2+o(1)) n).
\end{equation*}
The corollary is proved.
\end{proof}

With the help of Corollary~\ref{AKM cor 2 bar}, we are ready to improve on Lemma~\ref{AKS lem 2} by showing that every vertex of sufficiently high degree in $G$ has, with high probability, many neighbours in $G$, which participate in connected components of $G_p$ of order $\Omega(n^2)$. Denote $c'_1 = \min(c_1(\varepsilon/2), 1)$ and $c'_2 = c_2(\varepsilon/2)$.

\begin{lemma}\label{AKS lem 3}
Fix $\varepsilon\in (0,0.1)$ and $p = (1+\varepsilon)/\bard$. There are constants $c_3 = c_3(\varepsilon) > 0$ and $c_4 = c_4(\varepsilon) > 0$ such that for every vertex $v$ satisfying $\deg_G(v)\ge (1-\varepsilon/32) \bard$, the following property holds with probability at least $1 - \exp(-(c_4+o(1))n)$: $v$ is adjacent (in $G$) to at least $c_3 n$ vertices, participating in connected components of $G_p$ of order at least $c_3\varepsilon n^2/32 C$.
\end{lemma}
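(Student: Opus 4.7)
The plan is to apply two-round exposure with $p_1 = (1+\varepsilon/2)/\bard$ and $p_2$ defined by $(1-p_1)(1-p_2) = 1-p$, so that $p_2 = \Theta(\varepsilon/\bard) = \Theta(1/n)$ (using $\bard \leq Cn$ from condition~\ref{cn 1}). First I expose $G_{p_1}$ and apply Corollary~\ref{AKM cor 2 bar} with parameter $\varepsilon/2$, whose degree hypothesis $(1-(\varepsilon/2)/16)\bard = (1-\varepsilon/32)\bard$ matches ours: with probability at least $1 - \exp(-(c'_2+o(1))n)$, the vertex $v$ is a $G$-neighbour to at least $m := \lceil c'_1 \varepsilon n/128 C\rceil$ disjoint cells $\mathcal{C}_1, \ldots, \mathcal{C}_m$ of $G_{p_1}$, each containing a set $T_i$ of at least $\varepsilon n/34 C$ \emph{special vertices} (i.e., vertices possessing the $\varepsilon/2$-version of property $P_{p_1}$). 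Each such $u \in T_i$ is, by definition of $P_{p_1}$, $G$-adjacent to at least $c'_1 \varepsilon n/128 C$ vertex-disjoint \emph{extra cells} of $G_{p_1}$, each of order at least $\varepsilon n/16 C$.

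It then suffices to show that, conditionally on the $G_{p_1}$-structure above, each $\mathcal{C}_i$ lies in a $G_p$-component of order at least $c_3 \varepsilon n^2/32 C$ with probability at least $1 - \exp(-\Omega(n))$; a union bound over the $m = O(n)$ cells, combined with the failure probability from Corollary~\ref{AKM cor 2 bar}, yields total failure $\exp(-(c_4+o(1))n)$, and since each $\mathcal{C}_i$ contributes a distinct $G$-neighbour of $v$ (disjointness of the $\mathcal{C}_i$), the lemma follows. Fix $i$: enumerate the distinct extra cells occurring for $u \in T_i$ as $\mathcal{D}_{i,1}, \ldots, \mathcal{D}_{i,K_i}$, and let $k_{i,j}$ count the special vertices $u \in T_i$ having $\mathcal{D}_{i,j}$ as an extra cell. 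For each pair $(u, \mathcal{D}_{i,j})$ select one $G$-edge from $u$ to $\mathcal{D}_{i,j}$; by disjointness of a single $u$'s extra cells and distinctness of first endpoints across different $u$'s, the selected edges are pairwise distinct, and $\sum_j k_{i,j} \geq |T_i| \cdot c'_1 \varepsilon n/128 C = \Omega(n^2)$. Since $G_{p_2}$ is independent of $G_{p_1}$, each selected edge opens in $G_{p_2}$ independently with probability $p_2$; letting $Y_{i,j}$ indicate that at least one selected edge associated to $\mathcal{D}_{i,j}$ opens, the $Y_{i,j}$ are independent Bernoullis with $\mathbb E[Y_{i,j}] \geq 1 - e^{-k_{i,j} p_2}$.

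The main step, and the main obstacle, is to prove $\mathbb E[\sum_j Y_{i,j}] \geq c_5 n$ for some $c_5 > 0$ via the following case analysis. Split the indices into $S_{\text{sm}} = \{j: k_{i,j} p_2 \leq 1\}$ and $S_{\text{lg}}$, and use $1 - e^{-x} \geq x/2$ for $x \leq 1$ and $1 - e^{-x} \geq 1/2$ for $x \geq 1$. If $\sum_{j \in S_{\text{sm}}} k_{i,j} \geq \tfrac{1}{2}\sum_j k_{i,j}$, then $\mathbb E[\sum_j Y_{i,j}] \geq (p_2/4) \sum_j k_{i,j} = \Omega(n)$ directly; otherwise $\sum_{j \in S_{\text{lg}}} k_{i,j} \geq \tfrac{1}{2}\sum_j k_{i,j} = \Omega(n^2)$, and combined with the upper bound $k_{i,j} \leq |T_i| = O(n)$ (since the cells produced by Lemma~\ref{AKS lem 1} have order $\Theta(n)$), this forces $|S_{\text{lg}}| = \Omega(n)$, so $\mathbb E[\sum_j Y_{i,j}] \geq |S_{\text{lg}}|/2 = \Omega(n)$. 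The delicate point is the need to guarantee $\Omega(n)$ \emph{distinct} merged cells even in the adversarial regime where different special vertices of $\mathcal{C}_i$ share many extra cells; the case analysis handles this because such sharing makes every shared cell hit by many special vertices, and hence merged with constant probability, despite $p_2 = \Theta(1/n)$. A Chernoff-type bound applied to the independent $(Y_{i,j})_j$ then gives $\sum_j Y_{i,j} \geq c_5 n/2$ with probability at least $1 - \exp(-\Omega(n))$; since each merged extra cell contributes at least $\varepsilon n/16 C$ vertices to $\mathcal{C}_i$'s $G_p$-component, this component has order at least $(c_5 n/2)(\varepsilon n/16 C) \geq c_3 \varepsilon n^2/32 C$ for appropriate $c_3, c_4 > 0$.
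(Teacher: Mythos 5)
Your setup (two-round exposure with $p_1=(1+\varepsilon/2)/\bard$, applying Corollary~\ref{AKM cor 2 bar} at parameter $\varepsilon/2$, and then sprinkling $p_2=\Theta(1/n)$ edges from the special vertices of each cell $\mathcal C_i$ into their associated extra cells) is exactly the paper's strategy, and your case analysis does correctly show that $\Omega(n)$ \emph{distinct} extra cells get attached to $\mathcal C_i$ with probability $1-\exp(-\Omega(n))$. The gap is in the very last accounting step: you conclude that the $G_p$-component of $\mathcal C_i$ has order at least $(\text{number of merged cells})\cdot(\varepsilon n/16C)$, but the extra cells associated to \emph{different} special vertices $u\in T_i$ need not be disjoint --- disjointness is only guaranteed within the family of one fixed $u$. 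Distinct cells (as vertex sets) can overlap almost entirely: for instance, all the merged cells could be connected subgraphs of order $\lceil\varepsilon n/16C\rceil$ sitting inside one common $G_{p_1}$-component of order $O(n)$, in which case merging $\Omega(n)$ distinct cells adds only $O(n)$ new vertices, not $\Omega(n^2)$. Your multiplicity statistic $k_{i,j}$ measures how many special vertices see the cell $\mathcal D_{i,j}$, which says nothing about how much $\mathcal D_{i,j}$ overlaps the \emph{other} cells being merged, so neither branch of your small/large dichotomy controls the size of the union.

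The paper closes exactly this hole with a sequential exploration: it processes the special vertices $u_1,u_2,\dots$ one at a time and only declares a success when $u_i$ connects (in the $p_2$-round) to an associated cell whose intersection with the union of the previously attached cells is less than half the cell, which is possible with probability $\Omega(1)$ per step because at most $2j$ of $u_i$'s $\Omega(n)$ pairwise disjoint cells can intersect the current union (of $j$ half-cells) that heavily. Each success then provably contributes at least $\lceil\varepsilon n/16C\rceil/2$ \emph{new} vertices, and Chernoff over the $\Omega(n)$ steps gives the $\Omega(n^2)$ union bound. Your argument needs this (or an equivalent device bounding $|\mathcal D_{i,j}\setminus\bigcup_{j'<j}\mathcal D_{i,j'}|$ from below for the cells you count) to be complete. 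A secondary, fixable point: the independence of the indicators $Y_{i,j}$ requires the selected edges to be pairwise distinct across different $j$, and your argument misses the case of an edge $uw$ with both endpoints special and each lying in an extra cell of the other; this can be patched by discarding such coincidences, but the union-size issue above is the substantive missing idea.
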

\begin{proof}
We use the technique of two-round exposure with $p_1 = (1 + \varepsilon/2)/\bard$ and $p_2$ given by the equation $(1-p_1)(1-p_2) = (1-p)$. Since $\bard\to +\infty$ with $n$, $p_2 = (\varepsilon/2+o(1))/\bard$, so for every large enough $n$ we have $p_2\ge \varepsilon/4\bard$. 

Fix a vertex $v$ satisfying $\deg_G(v)\ge (1-\varepsilon/32)\bard$. By Corollary~\ref{AKM cor 2 bar}, applied with $\varepsilon/2$ instead of $\varepsilon$, we get that with probability $1 - \exp(-(c'_2+o(1))n)$ the vertex $v$ is a neighbour (in $G$) to at least $c'_1\varepsilon n/128 C$ disjoint cells of $G_{p_1}$, each containing at least $\varepsilon n/34 C$ vertices with the property $P_{p_1}$. We condition on this event. Fix any such vertex $v$ and let $\mathcal C_1, \dots, \mathcal C_k$ be the cells, which correspond to $v$ in the above statement, with $k\ge c'_1\varepsilon n/128 C$.

Fix an arbitrary cell, say $\mathcal C_1$, and let $u_1, u_2, \dots, u_m$ be vertices in $\mathcal C_1$ which satisfy the property $P_{p_1}$, where $m = \lfloor \varepsilon n/34 C\rfloor$. Moreover, assume that for every vertex $u_i$ and every cell $\mathcal C$ in a fixed set of $\lceil c'_1\varepsilon n/128 C\rceil$ disjoint cells, which witnesses that $u_i$ satisfies the property $P_{p_1}$, $\mathcal C$ contains exactly $\lceil \varepsilon n/16 C\rceil$ vertices (clearly any connected graph $H$ contains a connected subgraph of any order between $1$ and $|V(H)|$). For every $i\in [m]$, we associate the above set of cells to the vertex $u_i$.

Now, we consider the independent percolation of the edges in $G$ with parameter $p_2$. This is our second round. We do the following exploration process. List $u_1, \dots, u_m$ in this order and start exploring their neighbourhoods one by one. If $u_1$ connects (during the second round percolation with parameter $p_2$) to a neighbouring cell of size $\lceil \varepsilon n/16 C\rceil$, which was associated to it, then name this cell $\mathcal C'_1$. Then, go to $u_2$. If $\mathcal C'_1$ was well defined, there are at most two cells $\mathcal C'$ among the ones, associated to $u_2$, such that $|\mathcal C'\cap \mathcal C'_1|\ge \lceil \varepsilon n/16 C\rceil/2$. Let $(w^2_j)_{j\in [2]}$ be the two vertices, which connect $u_2$ to neighbouring cells, associated to $u_2$ and with the largest intersection with $\mathcal C'_1$. Then, if $u_2$ connects to a neighbouring cell associated to it via an edge, different from $u_2w^2_1$ and $u_2w^2_2$, name this cell $\mathcal C'_2$ and go to $u_3$. Then, since $|\mathcal C'_1\cup \mathcal C'_2|\le 2\lceil \varepsilon n/16 C\rceil$, there are at most $4$ cells $\mathcal C'$, associated to $u_3$, for which $|\mathcal C'\cap (\mathcal C'_1\cup \mathcal C'_2)|\ge \lceil \varepsilon n/16 C\rceil/2$. Let $(w^3_j)_{j\in [4]}$ be the four vertices, which connect $u_3$ to neighbouring cells, associated to $u_3$ and with the largest possible intersection with $\mathcal C'_1\cup \mathcal C'_2$. Then, if $u_3$ connects to a neighbouring cell associated to it via an edge, different from $(u_3w^3_j)_{j\in [4]}$, name this cell $\mathcal C'_3$ and continue with $u_4$, ets.

Suppose that in the moment of exploring the neighbourhood of the vertex $u_i$ we came across the cells $\mathcal C'_{i_1}, \mathcal C'_{i_2}, \dots, \mathcal C'_{i_j}$ for some $1\le i_1 < \dots < i_j \le i-1$. Then, 
\begin{equation*}
    \bigg| \bigcup_{1\le s\le j} \mathcal C'_{i_s} \bigg| = \sum_{1\le s\le j} \bigg|\mathcal C'_{i_s}\setminus \left(\mathcal C'_{i_1}\cup \dots\cup \mathcal C'_{i_{s-1}}\right)\bigg|\ge \dfrac{j\varepsilon n}{32 C}.
\end{equation*}
Also, for every $i\le i_0 := \lfloor c'_1\varepsilon n/512 C\rfloor$ (by definition of $c'_1$ we have $i_0\le m$), there are at least $c'_1\varepsilon n/128 C - 2 c'_1\varepsilon n/512 C\ge c'_1\varepsilon n/256 C$ cells, associated to the vertex $u_i$, which do not intersect the union of cells $\mathcal C'_{i_1}, \mathcal C'_{i_2}, \dots, \mathcal C'_{i_j}$ in more than $\lceil \varepsilon n/16 C\rceil/2$ vertices. Thus, for every large enough $n$ and every $i\le i_0$, $u_i$ has probability at least $p_3 := p_2 c'_1\varepsilon n/256C\ge c'_1\varepsilon^2/1024 C^2$ to connect to a cell, which does not intersect the union of $\mathcal C'_{i_1}, \mathcal C'_{i_2}, \dots, \mathcal C'_{i_j}$ in more than $\lceil \varepsilon n/16 C\rceil/2$ vertices. We conclude that the indicator functions of the events 
\begin{equation*}
    \left(\left\{u_i \text{ connects to a cell } \mathcal C' \text{ associated to it and such that } |\mathcal C'\cap \left(\cup_{1\le \ell\le j} \mathcal C_{i_{\ell}}\right)| < \dfrac{\lceil \varepsilon n/16 C\rceil}{2}\right\}\right)_{i\in [i_0]}
\end{equation*}
stochastically dominate a family of i.i.d. random variables $(B_i)_{i\in [i_0]}$ with Bernoulli distribution with parameter $p_3$. By a direct application of Chernoff's inequality (Lemma~\ref{chernoff}) we conclude that for every large enough $n$ and $c_3 = c_3(\varepsilon) = (c'_1)^2\varepsilon^3/2^{21} C^3$
\begin{equation*}
    \mathbb P\left(\sum_{i\in [i_0]} B_i\le \frac{p_3i_0}{2}\right)\le \exp\left(-\frac{p_3i_0}{8}\right)\le \exp\left(-\dfrac{(c'_1\varepsilon n/2)\cdot (c'_1 \varepsilon^2)}{8\cdot 512C\cdot 1024 C^2}\right) = \exp(-c_3 n/4).
\end{equation*}

Thus, for every large enough $n$ and every $\ell\in [k]$, the connected component of $\mathcal C_{\ell}$ in $G_p = G_{p_1}\cup G_{p_2}$ contains at least $(p_3 i_0/2)\cdot (\varepsilon n/32 C)\ge c_3\varepsilon n^2/32 C$ vertices with probability at least $1-\exp(-c_3 n/4)$. A union bound over all $k\le Cn$ cells shows that, for every large enough $n$ and for every vertex $v$ satisfying $\deg_G(v)\ge (1-\varepsilon/32) \bard$ and with at least $c'_1\varepsilon n/128 C$ neighbouring cells of order $\lceil\varepsilon n/16 C\rceil$ in $G_{p_1}$, with probability at least $1 - Cn\exp(-c_3 n/4)$ each of the connected components of $\mathcal C_1, \mathcal C_2, \dots, \mathcal C_k$ in $G_p$ is of order at least $c_3\varepsilon n^2/32 C$.

Now, let $c_4 = \min(c'_2, c_3/4)$. Then, with probability at least $1 - \exp(-(c_4+o(1)) n)$, the vertex $v$ is incident to at least $c_3 n$ vertices, which participate in connected components of $G_p$ of size at least $c_3\varepsilon n^2/32 C$. The lemma is proved.
\end{proof}

Up to this moment, we ensured the existence of a large number of connected components of order at least $\Theta(n^2)$ in $G_p$. Recall that $\gamma$ is a positive constant such that, for every $j\in [n]$, $n^{-\gamma}\le i(G_j)$. If $\gamma\in (0,1)$, we are ready to complete the proof of Theorem~\ref{main thm}. However, for larger values we will need to show more. In the sequel we ensure that there are a lot of components of size $\Omega(n^{\gamma'+2})$ in $G_p$ for some $\gamma' > \gamma$. The aim of the next lemma is to iterate the procedure of Lemma~\ref{AKS lem 2}, Corollary~\ref{AKM cor 2 bar} and Lemma~\ref{AKS lem 3} to provide the a.a.s. existence of these larger connected components. Unlike the results we presented above, we will mostly rely on the asymptotic notations $\Theta$ and $\Omega$ in the proof of Lemma~\ref{AKS lemma plus} rather than give explicit constants to simplify the presentation, having in mind that very similar but more precise formulations of the claims below were already presented in detail.

\begin{lemma}\label{AKS lemma plus}
Fix any integer $k\ge 2$, any $\varepsilon\in (0, 0.1)$ and $p = (1+\varepsilon)/\bard$. Then, there are positive constants $\beta_k\ge 32, C_k = C_k(\varepsilon), C'_k = C'_k(\varepsilon), C''_k = C''_k(\varepsilon)$ such that for every vertex $v$ satisfying $\deg_G(v)\ge (1-\varepsilon/\beta_k) \bard$, the following property holds with probability at least $1 - \exp(-(C''_k+o(1))n)$: $v$ is adjacent (in $G$) to at least $C'_k n$ vertices, participating in connected components in $G_p$ of order at least $C_k n^k$.
\end{lemma}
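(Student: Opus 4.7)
I would prove Lemma~\ref{AKS lemma plus} by induction on $k \ge 2$. The base case $k = 2$ is precisely Lemma~\ref{AKS lem 3} (taking $\beta_2 = 32$, $C_2 = c_3\varepsilon/(32C)$, $C'_2 = c_3$, and $C''_2 = c_4$). For the inductive step from $k - 1$ to $k$, the idea is to mimic the three-layer structure of Lemma~\ref{AKS lem 2}, Corollary~\ref{AKM cor 2 bar} and Lemma~\ref{AKS lem 3}, but with the role previously played by \emph{cells of size $\Omega(n)$} replaced by \emph{components of $G_{p_1}$ of size $\Omega(n^{k-1})$} coming from the inductive hypothesis.

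Concretely, I would set up a two-round exposure with $p_1 = (1 + \varepsilon/2)/\bard$ and $p_2$ chosen so that $(1 - p_1)(1 - p_2) = 1 - p$ (so $p_2 \ge \varepsilon/(4\bard)$ for large $n$), and denote by $Q^{(k-1)}$ the conclusion of the inductive hypothesis at level $k-1$ applied with $p_1$ in place of $p$: a vertex $u$ has $Q^{(k-1)}$ if it is adjacent in $G$ to at least $C'_{k-1} n$ vertices lying in components of $G_{p_1}$ of size at least $C_{k-1} n^{k-1}$. By the inductive hypothesis, any $u$ with $\deg_G(u) \ge (1 - \varepsilon/(2\beta_{k-1}))\bard$ enjoys $Q^{(k-1)}$ with probability $1 - \exp(-\Omega(n))$. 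I would first replay the projection argument of Lemma~\ref{AKS lem 2}, using the $i_{\max} = \Theta(n)$ disjoint projections $H_i$ and applying the inductive hypothesis inside each $H_i$ to the vertex $\hat v_i$ (whose degree in $H_i$ is controlled via Corollary~\ref{cor neighbours}); Chernoff's inequality then shows that for $\beta_k$ large enough, any $v$ with $\deg_G(v) \ge (1 - \varepsilon/\beta_k) \bard$ is adjacent in $G$ to $\Omega(n)$ disjoint cells of $G_{p_1}$, each containing at least one vertex with $Q^{(k-1)}$. Replaying the labelling argument of Corollary~\ref{AKM cor 2 bar} on the $\Theta(n)$ vertices of each such cell nearest to $v$ in $G_{p_1}$, I would upgrade this further: each such cell contains $\Theta(n)$ vertices with property $Q^{(k-1)}$, again with failure probability $\exp(-\Omega(n))$.

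The third step is the sprinkling step with $p_2$. Fix one neighbouring cell $\mathcal C$ of $v$ and let $u_1, \dots, u_m$ be its $m = \Theta(n)$ vertices with $Q^{(k-1)}$. Processing them in order exactly as in Lemma~\ref{AKS lem 3}, for each $u_j$ the $p_2$-sprinkling connects $u_j$ to one of its $\ge C'_{k-1} n$ large-component neighbours with probability at least $1 - (1 - p_2)^{C'_{k-1} n} = \Omega(1)$. Tracking the set $A_j$ of large components already merged into $\mathcal C$ after $j$ successful steps one has $|A_j| = O(j \cdot n^{k-1})$, so by a volume-counting argument at most $O(|A_j|/n^{k-1}) = O(j)$ of the pairwise disjoint large components neighbouring $u_{j+1}$ can overlap substantially with $A_j$; for $j \ll n$, there is still constant probability of attaching $u_{j+1}$ to a large component disjoint from $A_j$. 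A Chernoff bound then produces $\Omega(n)$ disjoint merges into $\mathcal C$ with probability $1 - \exp(-\Omega(n))$, so $\mathcal C$ grows in $G_p$ to a component of size $\Omega(n \cdot n^{k-1}) = \Omega(n^k)$. A final union bound over the $\Omega(n)$ cells $\mathcal C$ neighbouring $v$ finishes the inductive step; the constants $\beta_k, C_k, C'_k, C''_k$ are obtained explicitly from the level-$(k-1)$ constants and $\varepsilon, C$.

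The main obstacle is the disjointness bookkeeping in the sprinkling step: as stated, $Q^{(k-1)}$ only guarantees many neighbours of $u_j$ inside large components, but a priori these neighbours could all lie in the \emph{same} component, in which case the merge yields only $\Omega(n^{k-1})$ instead of $\Omega(n^k)$. To handle this I would actually prove by induction a slightly stronger statement asserting that each such $u_j$ has $\Omega(n)$ \emph{pairwise disjoint} large-component neighbours; this disjointness is already present at the base case because the cells $\mathcal C_1, \dots, \mathcal C_k$ produced in Lemma~\ref{AKS lem 3} are themselves disjoint, and it propagates inductively as long as the above volume-counting step succeeds. Once this strengthening is in place, the rest of the argument is a somewhat mechanical (though notationally heavy) iteration of the $k = 2$ case, which likely explains the author's decision to suppress the constants and work with $\Theta, \Omega$ notation.
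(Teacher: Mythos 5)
Your overall architecture --- induction on $k$ with Lemma~\ref{AKS lem 3} as the base case, two-round exposure, the projections $H_i$, a labelling step as in Corollary~\ref{AKM cor 2 bar}, and a final sprinkling with volume counting --- is the same as the paper's. But the way you handle what you yourself identify as the main obstacle, the disjointness bookkeeping, has a genuine gap. You propose to strengthen the induction hypothesis so that each relevant vertex has $\Omega(n)$ \emph{pairwise disjoint} large-component neighbours, asserting that this holds at the base case and ``propagates'' thanks to the volume counting. Neither claim is justified. At the base case, Lemma~\ref{AKS lem 3} (and its proof) only guarantees that the linear-size cells $\mathcal C_1,\dots,\mathcal C_k$ adjacent to $v$ are disjoint; the $\Omega(n^2)$-components grown around them in $G_p$ may all merge into a single component, so $v$ need not neighbour $\Omega(n)$ \emph{distinct} large components. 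The same problem recurs in the inductive step: your volume counting only controls overlaps among the large cells attached to the vertices of \emph{one} cell $\mathcal C$, and says nothing about whether the $\Omega(n)$ grown objects around $v$ are disjoint --- they can all coincide, and a crude global volume count cannot fix this, since after $\Theta(n)$ cells the excluded volume is of order $n^{k}$, enough to block all $\Omega(n)$ candidate cells of size $\Theta(n^{k-1})$ associated to a given $u_j$. So the strengthened statement you invoke in the sprinkling step is never actually established.

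The paper resolves this differently, and this is the idea your sketch is missing: the induction hypothesis is kept as the plain lemma statement (no disjointness), and the disjointness needed for the volume counting is \emph{re-derived at each level} via the projections. Concretely, in Claim~\ref{AKS claim 2} one writes $p_1$ as an inner two-round exposure $(1-p_0)(1-p'_0)=1-p_1$, applies the induction hypothesis with parameter $p_0$ (and $\varepsilon/4$) \emph{inside each vertex-disjoint} $H_i$ to the vertex $\hat v_i$, and then uses the $p'_0$-round to attach $\hat v_i$ to a large component of $H_{i,p_0}$; since these large cells live in distinct $H_i$'s they are automatically disjoint, giving every high-degree vertex the property $P_{p_1,k-1}$ (with disjointness) with probability $1-\exp(-\Omega(n))$. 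It is this property, fed through the labelling step of Claim~\ref{AKS claim 2 bar}, and not a strengthened induction hypothesis, that supplies the disjoint witness cells for the volume counting in the final sprinkling. Your projection step does appear, but it is used to a different and unclear end (cells ``containing at least one vertex with $Q^{(k-1)}$''), and so does not deliver the disjoint large cells the sprinkling needs. A further small point: since the large components carry no upper bound on their size, the bound $|A_j|=O(jn^{k-1})$ requires trimming each witness cell to a connected subgraph of exactly $\Theta(n^{k-1})$ vertices, as the paper does; without this the counting is not valid as written.
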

\begin{proof}
We argue by induction. By Lemma~\ref{AKS lem 3} the statement is true for $k=2$ with parameters $\beta_2 = 32, C_2 = c_3\varepsilon/32C, C'_2 = c_3$ and $C''_2 = c_4$ for every $\varepsilon\in (0, 0.1)$.

Suppose that the statement is satisfied for some $k-1\ge 2$. 
%In what follows, the constant $\varepsilon > 0$ is fixed in an interval $(0, \varepsilon_0)$, where $\varepsilon_0 > 0$ is some constant depending only on $k$. 
Fix $p_0 = (1+\varepsilon/4)/\bard$, $p'_0 = (\varepsilon/4+o(1))/\bard$ and $p_1 = (1+\varepsilon/2)/\bard$ so that $(1-p_0)(1-p'_0) = 1 - p_1$. Moreover, for any vertex $v$ in $G$, denote by $P_{p,k}$ the following property:

\begin{center}
the vertex $v$ is a neighbour in $G$ of $\Omega(n)$ disjoint cells in $G_p$, each of order $\Omega(n^k)$.
\end{center}

The next claim is an analogue of Lemma~\ref{AKS lem 2}, so we give only the main points of the proof.

\begin{claim}\label{AKS claim 2}
Every vertex of degree at least $(1-\varepsilon/8\beta_{k-1})\bard$ in $G$ has property $P_{p_1, k-1}$ with probability $1 - \exp(-\Omega(n))$.
\end{claim}
\begin{proof}
We follow the proof of Lemma~\ref{AKS lem 2}. Fix a vertex $v = (v_1, v_2, \dots, v_n)$ satisfying $\deg_G(v)\ge (1-\varepsilon/8\beta_{k-1})\bard$. For every $i\in [n]$, let $u_i$ be a neighbour of $v_i$ in $G_i$. Denote $\hat{v}_i = (v_1, \dots, v_{i-1}, u_i, v_{i+1}, \dots, v_n)$ and 
\begin{equation*}
    H_i := \left(\underset{j\in [i-1]}{\square} v_j\right)\square\hspace{0.2em} u_i\hspace{0.2em}\square\left(\underset{k\in [n]\setminus [i]}{\square} G_k\right).
\end{equation*}

Then, for every $\varepsilon\in (0, 0.1)$ there exists a positive constant $\overline{c}_{k-1} = \overline{c}_{k-1}(\varepsilon)\le 1/2$ such that, for every $i\le \overline{c}_{k-1}n$, the degree of $\hat{v}_i$ in $H_i$ is at least $(1-\varepsilon/4\beta_{k-1})\bard(H_i) = (1-(\varepsilon/4)/\beta_{k-1})\bard(H_i)$. By the induction hypothesis, applied with $\varepsilon/4$, $p_0$, $\hat{v}_i$ and $H_i$ (which is isomorphic to the product of at least $n-i\ge n/2$ of the graphs $(G_j)_{j\in [n]}$), the vertex $\hat{v}_i$ is incident to at least $\Theta(n/2)$ vertices, participating in connected components in $H_{i,p_0}$ of order $\Omega((n/2)^{k-1})$ with probability $1 - \exp(-\Omega(n/2))$. (Note that although $\Omega(n) = \Omega(n/2)$ and $\Omega(n^{k-1}) = \Omega((n/2)^{k-1})$, we add the constants to indicate that the graph $H_i$ is a product of less that $n$, but at least $n/2$ graphs. When considered appropriate, similar implicit indications are given below as well).

It remains to notice that the graphs $(H_i)_{1\le i\le \overline{c}_{k-1}n}$ are disjoint and therefore the vertices $(\hat{v}_i)_{1\le i\le \overline{c}_{k-1}n}$ connect to cells of order $\Omega((n/2)^{k-1})$ in $(H_i)_{1\le i\le \overline{c}_{k-1}n}$ respectively at the second round percolation with parameter $p'_0$ independently and with probability $p'_0 \Omega(n/2) = \Omega(1)$. Thus, by Chernoff's inequality (Lemma~\ref{chernoff}) we deduce that the vertex $v$ has $\Omega(1) \cdot \overline{c}_{k-1} n/2 = \Omega(n)$ neighbours, which participate into disjoint cells of $G_{p_1} = G_{p_0}\cup G_{p'_0}$ of size $\Omega((n/2)^{k-1})$ with probability $1 - \exp(-\Omega(1) \cdot \overline{c}_{k-1} n/8) = 1 - \exp(-\Omega(n))$. The proof is completed.
\end{proof}

\begin{claim}\label{AKS claim 2 bar}
Every vertex $v$ with $\deg_G(v)\ge (1-\varepsilon/16\beta_{k-1})\bard$ satisfies the following property with probability $1 - \exp(-\Omega(n))$: the vertex $v$ is a neighbour in $G$ to $\Omega(n)$ disjoint cells of $G_{p_1}$, each containing $\Omega(n)$ vertices with the property $P_{p_1, k-1}$.
\end{claim}
\begin{proof}
We follow the proof of Corollary~\ref{AKM cor 2 bar}. 
Fix any vertex $v$ satisfying $\deg_G(v)\ge (1-\varepsilon/16\beta_{k-1})\bard$. Since $\beta_{k-1}\ge 32$, by Lemma~\ref{AKS lem 2} $v$ has probability $1-\exp(-\Omega(n))$ to have property $P_{p_1}$. We condition on this event. Then, for every cell $\mathcal C$ among the $\Omega(n)$ disjoint neighbouring cells of order at least $\lceil\varepsilon n/16C\rceil$, corresponding to $v$, put a label $\ell_v$ on the $\lceil\varepsilon n/(16 C\beta_{k-1} + 1)\rceil$ vertices of $\mathcal C$ that are closest to $v$ in the graph $G_{p_1}$ (if some set of vertices is at the same distance to $v$ in $G_p$, make an arbitrary choice which of them to label, if necessary). Thus, for every vertex $u$ which has received a label $\ell_v$ we have $d_G(u,v)\le d_{G_{p_1}}(u,v)\le \lceil\varepsilon n/(16 C\beta_{k-1} + 1)\rceil$. Moreover, by Corollary~\ref{cor neighbours} for every large enough $n$ we have
$$|\deg(u) - \deg(v)|\le C\lceil\varepsilon n/(16 C\beta_{k-1} + 1)\rceil\le \varepsilon n/16 \beta_{k-1}$$ 
and so 
$$\deg(u)\ge \deg(v) - \varepsilon n/16 \beta_{k-1}\ge \bard - \varepsilon \bard/16 \beta_{k-1} - \varepsilon n/16 \beta_{k-1} \ge (1 - \varepsilon/8 \beta_{k-1}) \bard.$$

Note that a total of $O(n^2)$ vertices will receive the label $\ell_v$, and furthermore by Claim~\ref{AKS claim 2} each of these vertices has property $P_{p_1, k-1}$ with probability $1 - \exp(-\Omega(n))$. Then, conditionally on the event that $v$ has property $P_{p_1}$, any vertex $u$ with label $\ell_v$ has property $P_{p_1, k-1}$ with probability
\begin{align*}
    & \mathbb P(u \text{ has }P_{p_1, k-1}\hspace{0.2em}|\hspace{0.2em} v\text{ has } P_{p_1}) = \dfrac{\mathbb P(u \text{ has } P_{p_1, k-1} \text{ and } v \text{ has }P_{p_1})}{\mathbb P(v\text{ has } P_{p_1})}\ge 1 - 2\exp(-\Omega(n)) = 1 - \exp(-\Omega(n)).
\end{align*}

Thus, the vertex $v$ satisfies the property from the statement of the claim with probability at least 
\begin{equation*}
    1 - \sum_{u \text{ has label }\ell_v} \mathbb P(u \text{ does not have }P_{p_1, k-1}\hspace{0.2em}|\hspace{0.2em} v\text{ has } P_{p_1})\ge 1 - O(n^2)\exp(-\Omega(n)) = 1 - \exp(-\Omega(n)).
\end{equation*}
The claim is proved.
\end{proof}

The finish the proof of the lemma, we follow the ideas of the proof of Lemma~\ref{AKS lem 3}. We use once again the technique of two-round exposure with $p_1 = (1 + \varepsilon/2)/\bard$ and $p_2 = (\varepsilon/2+o(1))/\bard$ such that $(1-p_1)(1-p_2) = (1-p)$.

Fix a vertex $v$ satisfying $\deg_G(v)\ge (1-\varepsilon/32 \beta_{k-1})\bard$. By Claim~\ref{AKS claim 2 bar} we get that, for some positive constant $\hat{C}_{k-1} = \hat{C}_{k-1}(\varepsilon)$, with probability $1 - \exp(-\Omega(n))$ the vertex $v$ is a neighbour (in $G$) to at least $\hat{C}_{k-1} n$ disjoint cells of $G_{p_1}$, each containing $\Omega(n)$ vertices with the property $P_{p_1, k-1}$. Let us condition on this event, and let $\mathcal C_1, \dots, \mathcal C_t$ be the cells, which correspond to $v$ in the above statement, where $t$ is an integer satisfying $\hat{C}_{k-1} n\le t\le Cn$.

Fix an arbitrary cell among $\mathcal C_1, \dots, \mathcal C_t$, say $\mathcal C_1$, and let $u_1, u_2, \dots, u_m$ be vertices in $\mathcal C_1$ which satisfy the property $P_{p_1, k-1}$, where $m = \Omega(n)$. Moreover, assume that for every vertex $u_i$ and every cell $\mathcal C$ among a fixed set of $\Omega(n)$ disjoint cells, which witnesses that $u_i$ satisfies the property $P_{p_1, k-1}$, $\mathcal C$ contains exactly $\lceil \hat{C}'_{k-1} n^{k-1}\rceil$ vertices, where $\hat{C}'_{k-1}$ is a positive constant depending only on $k$ and $\varepsilon$. By the very same exploration procedure as in the proof of Lemma~\ref{AKS lem 3} we show that with probability $1 - \exp(-\Omega(n))$ the connected component of the cell $\mathcal C_1$ in $G_p = G_{p_1}\cup G_{p_2}$ contains at least $s = \Omega(n)$ cells $\mathcal C'_1, \mathcal C'_2, \dots, \mathcal C'_s$ such that, for every $i\in [s]$, 
\begin{equation*}
    |V(\mathcal C'_i\setminus \cup_{j\in [i-1]} \mathcal C'_j)|\ge \dfrac{\lceil \hat{C}'_{k-1} n^{k-1}\rceil}{2}.
\end{equation*}
Recall that $\mathcal C'_1, \mathcal C'_2, \dots, \mathcal C'_s$ are cells, associated to different vertices among $u_1, u_2, \dots, u_m$, and therefore these are not necessarily disjoint. Thus, bounding from below the number of vertices in every new cell that does not participate in the union of the previous ones is crucial to attain the lower bound on the size of the union.

Since the above reasoning applies to each of the cells $\mathcal C_1, \mathcal C_2, \dots, \mathcal C_t$, associated to $v$, and $t = \Theta(n)$, we conclude by union bound that $v$ satisfies property $P_{p,k}$ with probability $1 - \Theta(n)\exp(-\Omega(n)) = 1 - \exp(-\Omega(n))$, which finishes the proof (the constants $C_k, C'_k$ and $C''_k$ are hidden in the $\Omega$ notation but $\beta_k$ could be defined recursively by $\beta_k = 32\beta_{k-1}$, so in particular $\beta_k = 32^{k-1}$).
\end{proof}

We are ready to prove Theorem~\ref{main thm}~b). Fix $k = \lceil 1+\gamma\rceil + 3$. Up to now, we have ensured the existence of a number of cells in $G_p$, which contain at least $C_k n^k$ vertices. In the sequel, let $\hat{C}_k = C_k(\varepsilon/2), \hat{C}'_k = C'_k(\varepsilon/2)$ and $\hat{C}''_k = C''_k(\varepsilon/2)$.

\begin{proof}[Proof of Theorem~\ref{main thm}~b)]
It is sufficient to prove the claim for every $\varepsilon\in (0, 0.1)$. Once again, we consider two-round exposure of $G_p = G_{p_1}\cup G_{p_2}$ with $p_1 = (1+\varepsilon/2)/\bard$ and $p_2 = (\varepsilon/2+o(1))/\bard$. By Lemma~\ref{AKS lemma plus} and Markov's inequality with probability at least $1 - \exp(-(\hat{C}''_k/2+o(1))n)$ all but at most an $\exp(-\hat{C}''_k n/2)$-proportion of all vertices of degree at least $(1-(\varepsilon/2)/\beta_k)\bard$ in $G$ have at least $\hat{C}'_k n$ neighbours, which participate in connected components of $G_{p_1}$ of order at least $\hat{C}_k n^k$. We condition on this event. Then, the number of edges, adjacent to vertices in connected components of $G_{p_1}$ of order at least $\hat{C}_k n^k$ is at least $(1+o(1)) \hat{C}'_k n |V(G)|/2$ (recall that asymptotically almost all vertices of $G$ have degree at least $(1-(\varepsilon/2)/\beta_k)\bard$). Since every vertex has degree at most $C n$ in $G$, there are at least $(1+o(1)) \hat{C}'_k |V(G)|/2C$ vertices of $G$ in connected components of $G_{p_1}$ of order at least $\hat{C}_k n^k$.

We prove that the following property holds with probability $1 - \exp(-\Omega(|V(G)|/n^k))$:\\

\noindent
the vertices in all connected components in $G_{p_1}$ of order at least $\hat{C}_k n^k$ cannot be partitioned into two sets, $V_1$ and $V_2$, such that $||V_1|-|V_2||\le (|V_1|+|V_2|)/3$ (or equivalently $|V_1|/2\le |V_2|\le |V_1|$ up to symmetry considerations) so that there is no path in $G_p$ between $V_1$ and $V_2$.\\

\noindent
On the above event, by Observation~\ref{ob discrete cont} we may directly conclude that the largest connected component of $G_p$ contains at least $(|V_1|+|V_2|)/3\ge (1+o(1)) \hat{C}_k |V(G)|/6 C$ vertices, which would finish the proof of Theorem~\ref{main thm}~b).

Since the number of connected components of $G_{p_1}$ of order at least $\hat{C}_k n^k$ is $O(|V(G)|/n^k)$, there are $2^{O(|V(G)|/n^k)}$ ways to partition these components into two sets. We will be interested only in partitions $(V_1, V_2)$ such that $|V_1|/2\le |V_2|\le |V_1|$. Consider two cases:
\begin{enumerate}
    \item $N_{G}[V_1]\cap N_{G}[V_2]\ge |V(G)|/n^{k-2}$, and
    \item $N_{G}[V_1]\cap N_{G}[V_2] < |V(G)|/n^{k-2}$.
\end{enumerate}

In the first case, we know by our conditioning that $(1 + o(1))|V(G)|/n^{k-2}$ of the vertices in $N_{G}[V_1]\cap N_{G}[V_2]$ have at least $\hat{C}'_k n/2$ neighbours (in $G$) in either $V_1$ or $V_2$, or in both. Therefore, the probability that a fixed vertex in $N_{G}[V_1]\cap N_{G}[V_2]$ connects $V_1$ and $V_2$ at the second round percolation with parameter $p_2$ is at least $(1 - (1-p_2)^{\hat{C}'_k n/2})\cdot p_2 = \Theta(1/n)$. Moreover, the above events are independent for different vertices in $N_{G}[V_1]\cap N_{G}[V_2]$. Therefore, the probability that $V_1$ and $V_2$ do not get connected at the second round percolation with parameter $p_2$ is at most
\begin{equation*}
    \left(1-\Theta(1/n)\right)^{(1+o(1))|V(G)|/n^{k-2}} =  \exp\left(-\Theta(|V(G)|/n^{k-1})\right) \ll 2^{-O(|V(G)|/n^k)}.
\end{equation*}

In the second case, since the number of edges between $V_2$ and $N_G(V_2)$ is at least $i(G) |V_2|$ by assumption, by Corollary~\ref{isoper cor} there are at least $i(G) |V_2|/Cn \ge n^{-1-\gamma} |V_2|/2C$ vertices in $V(G)\setminus V_2$, adjacent to $V_2$ in $G$. But 
$$|N_{G}[V_1]\cap N_{G}[V_2]| < |V(G)|/n^{k-2}\ll n^{-1-\gamma} |V_2|/2C,$$ 
so by our conditioning each of the $(1+o(1)) n^{-\gamma-1} |V_2|/2C$ vertices in $N_G[V_2]\setminus N_G[V_1]$ have at least $\hat{C}'_k n$ edges towards $V_2$ in $G$. On the other hand, since $|V_1|\ge |V_2|$ and $|V_1\cap N_G[V_2]| = o(|V_1|)$, we have by Corollary~\ref{isoper cor} that there are at least 
$$i(G) \min(|N_G[V_2]\setminus N_G[V_1]|, (1+o(1))|V_1|) = \Omega(n^{-\gamma} |V(G)|)$$ 
edges, going out of $N_G[V_2]\setminus N_G[V_1]$. One may directly deduce that there are $\Omega(n^{-\gamma} |V(G)|)/Cn = \Omega(n^{-\gamma-1} |V(G)|)$ disjoint edges, which have one endvertex in $N_G[V_2]\setminus N_G[V_1]$ and one endvertex in $V(G)\setminus N_G[V_2]$. Since all but $\exp(-\Omega(n)) |V(G)|$ vertices have at least $\hat{C}'_k n$ edges towards $V_1\cup V_2$ by our conditioning, we deduce that there are $\Omega(n^{-\gamma - 1} |V(G)|)$ disjoint edges $uv$ in $G$ such that $u$ has at least $\hat{C}'_k n$ edges towards $V_1$ and $v$ has at least $\hat{C}'_k n$ edges towards $V_2$. We conclude that for any such edge $u$ and $v$ there is a path from $V_1$ through $u$ and $v$ towards $V_2$ with probability $(1 - (1-p_2)^{\hat{C}'_k n})\cdot p_2\cdot (1 - (1-p_2)^{\hat{C}'_k n}) = \Theta(1/n)$. Therefore, the probability that $V_1$ and $V_2$ do not get connected at the second round percolation with parameter $p_2$ is at most
\begin{equation*}
    \left(1-\Theta(1/n)\right)^{\Omega(|V(G)|/n^{\gamma+1})} =  \exp\left(-\Omega(|V(G)|/ n^{\gamma+2})\right) \ll 2^{-O(|V(G)|/n^k)}.
\end{equation*}
We conclude the proof of Theorem~\ref{main thm}~b) by a union bound over all $2^{O(|V(G)|/n^k)}$ partitions of the components of size at least $\hat{C}_k n^k$ in $G_{p_1}$.
\end{proof}

\section{Discussion and further questions}\label{sec discussion}
In this paper we proved that there is a sharp threshold for the existence of a giant component after percolation of the product graph $G = G_1\square \dots\square G_n$ under the assumptions that $\max_{j\in [n]} \Delta(G_j)$ is uniformly bounded from above by a constant and $\min_{j\in [n]} i(G_j)$ decays to zero at most polynomially fast. As Remark~\ref{rem 2.1} points out, at the price of a more technical exposition Theorem~\ref{main thm} may be generalised for graphs with slowly increasing degrees. Except for simplicity, we spared the details also because we believe that Theorem~\ref{main thm} may also be proved in an even more general setting.

To begin with, we were not able to find convincing counterexamples of the sharp threshold phenomenon without the maximum degrees assumption. In the proof of Theorem~\ref{main thm} presented above, this assumption was used in most of our lemmas.

\begin{question}
Can one prove an analogue of Theorem~\ref{main thm} without the assumption on the maximum degrees of $(G_j)_{j\in [n]}$?
\end{question}

Concerning the assumption on the decay of the isoperimetric constants, we show that it cannot be removed entirely. Consider the graph $G$ where $G_1 = G_2 = \dots = G_{n-1}$, each containing two vertices ($0$ and $1$) a single edge ($01$), and $G_n$ being a cycle of length $2^{2^n}$. Then, all vertices in $G$ will have degree $n+1$. Fix $p = 2/(n+1)$. Note that for any edge $uv$ of $G_n$ we have that the probability that each of the edges $((x,u)(x,v))_{x\in \{0,1\}^{n-1}}$ of $G$ disappears after $p$-percolation is $(1 - 2/(n+1))^{2^{n-1}} = \exp(-(1+o(1)) 2^n/(n+1))$. Thus, on average many of the sets of edges $((x,u)(x,v))_{x\in \{0,1\}^{n-1}}\subseteq E(G)$ for different edges $uv$ of $G_n$ disappear a.a.s. after $p$-percolation, so no giant component exists since for any two edges $u_1v_1$ and $u_2v_2$ of $G_n$, the edge set $\{(x,u_1)(x,v_1)\}_{x\in \{0,1\}^{n-1}}\cup \{(x,u_2)(x,v_2)\}_{x\in \{0,1\}^{n-1}}\subseteq E(G)$ forms a cut in $G$. Although somewhat trivial, this example leads to another logical question.

\begin{question}
Can one prove an analogue of Theorem~\ref{main thm} if $\min_{j\in [n]}i(G_j)$ decreases faster than a polynomial function of $n$?
\end{question}

Of course, graph products other than the Cartesian product exist as well. It might be interesting to study the appearance of a giant component with respect to them.

\begin{question}
Can one prove analogous results for other graph products?
\end{question}

\section{Acknowledgements}
The author would like to thank Dieter Mitsche, Guillem Perarnau and Ivailo Hartarsky for several useful remarks, and Felix Joos for turning my attention to the reference~\cite{Joo2}. I am also grateful to the two anonymous referees for a number of important comments and suggestions.

\bibliographystyle{plain}
\bibliography{References}

\end{document}